\newtheorem{theorem}{Theorem}[section]
\newtheorem{lemma}[theorem]{Lemma}
\newtheorem{claim}[theorem]{Claim}
\newtheorem{corollary}[theorem]{Corollary}
\theoremstyle{remark}
\newtheorem{remark}{Remark}
\def\bx{{\bf x}}
\newcommand{\brac}[1]{\left(#1\right)}
\newcommand{\set}[1]{\left\{#1\right\}}
\def\E{\mbox{{\bf E}}}
\def\Pr{\mbox{{\bf Pr}}}
\newcommand{\ignore}[1]{}
  \def\d{\delta} 
\def\e{\varepsilon}    
\def\E{\mbox{{\bf E}}}
\def\Pr{\mbox{{\bf Pr}}}
\begin{document}

\title{Loose paths in random ordered hypergraphs}

\author{Andrzej Dudek}
\address{Department of Mathematics, Western Michigan University, Kalamazoo, MI}
\email{\tt andrzej.dudek@wmich.edu}
\thanks{The first author was supported in part by a grant from the Simons Foundation MPS-TSM-00007551.}

\author{Alan Frieze}
\address{Department of Mathematical Sciences, Carnegie Mellon University, Pittsburgh, PA}
\email{\tt frieze@cmu.edu}
\thanks{The second author was supported in part by NSF grant DMS-2341774}

\author{Wesley Pegden}
\address{Department of Mathematical Sciences, Carnegie Mellon University, Pittsburgh, PA}
\email{\tt wes@math.cmu.edu}
\thanks{The third author was supported in part by NSF grant DMS-2054503}


\begin{abstract}
We consider the length of {\em ordered loose paths} in the random $r$-uniform hypergraph $H=H^{(r)}(n, p)$. A ordered loose path is a sequence of edges $E_1,E_2,\ldots,E_\ell$ where $\max\set{j\in E_i}=\min\set{j\in E_{i+1}}$ for $1\leq i<\ell$. We establish fairly tight bounds on the length of the longest ordered loose path in~$H$ that hold with high probability.
\end{abstract}

\maketitle

\section{Introduction.}
There has been considerable work on the maximum length of paths in random graphs and hypergraphs, particularly in the graph case; see the survey by Frieze \cite{Survey} for a summary what is known on the subject. Albert and Frieze \cite{AF} considered the maximum length of a path in an orientation of $G_{n,p}$ where the edge $\set{i,j},i<j$ was always oriented from $i$ to $j$. In this paper we consider a generalization of this problem to $r$-uniform hypergraphs, where $r \geq 2$ is a fixed constant throughout.

Let $H=H^{(r)}(n, p)$ be the random $r$-uniform hypergraph on the set of vertices $[n]$ such that each $r$-tuple in $\binom{[n]}{r}$ is included as an edge with probability~$p$. Let $E_p$ denote its set of edges. Define an \emph{ordered loose path} of \emph{length}~$\ell$ in~$H$ as an increasing subsequence of vertices $v_1,v_2,\dots,v_{\ell (r-1)+1}\in [n]$ such that $\{v_1<\dots<v_r\}, \{v_r<\dots<v_{2r-1}\},\dots,\{v_{(\ell-1)(r-1)+1}<\dots<v_{\ell (r-1)+1}\}$ are the edges of $H$ (so that every pair of consecutive edges intersects in a single vertex). Let $\ell_{\max}$ be the maximal length of an ordered loose path in $H$. In the following, we discuss the likely value of $\ell_{\max}$ for varying values of $p$.

\begin{theorem}\label{thm:main1}
Let $r\ge 2$ and $\Omega(1)=p\le 1-o(1)$. Then, a.a.s.
\[
\frac{(1+o(1))n}{r -p - r(1-p)^r+ (pr+1)(1-p)^rp^{-2}} \le \ell_{\max} \le (1+o(1))n\left(\frac{1}{r} + \frac{1}{r(r-2+p^{-1})} \right).
\]
\end{theorem}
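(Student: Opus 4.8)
The plan is to treat the two bounds separately, both via a greedy/exploration process that builds an ordered loose path one edge at a time, revealing edges as needed. Think of the path as being traversed from left to right: at each step we have a current ``pivot'' vertex $v$ (the shared vertex of the last edge we placed and the next one), and we want to choose an edge $\{v = u_1 < u_2 < \cdots < u_r\}$ with $u_r$ as small as possible so as to leave as many vertices as possible available for future steps. Equivalently, after placing an edge we track the position of the new pivot inside $[n]$, and the length of the path is essentially $n$ divided by the average gap (in pivot-position) consumed per edge. Both bounds come from analyzing this per-edge displacement: the upper bound from a lower bound on how far the pivot must jump even under optimal play, and the lower bound by exhibiting an explicit (suboptimal but analyzable) greedy strategy and showing whp it succeeds.

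For the \emph{upper bound}, I would argue as follows. Fix any ordered loose path and look at a single edge $\{v = u_1 < \cdots < u_r\}$. Conditioned on $u_1 = v$ being at a given position, the ``cost'' $u_r - v$ of this edge is governed by how the $r-1$ new vertices are distributed. The key point is that whp one cannot do much better than a certain expected cost: in the interval just to the right of $v$, the number of potential edges is limited, and a first-moment / union-bound argument over all increasing sequences shows that a path using $\ell$ edges must whp have total displacement at least $\ooi \ell \cdot c_r(p)$ for an appropriate constant, where $c_r(p) = r\big(\tfrac1r + \tfrac{1}{r(r-2+p^{-1})}\big)^{-1}$ essentially. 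Concretely, for each edge the minimum of $u_r$ over available edges hanging off $v$ behaves, in expectation, like the position where a geometric-type waiting time is satisfied; summing these and applying concentration (Azuma/McDiarmid on the exploration, or a direct union bound over the at most $\binom{n}{\ell(r-1)+1}$ vertex sequences, which is $e^{o(n)}$ when $\ell = \Theta(n)$) yields $\ell_{\max} \le \ooi n\big(\tfrac1r + \tfrac{1}{r(r-2+p^{-1})}\big)$. The factor $\tfrac1r$ accounts for the $r-1$ vertices consumed per edge in the best case, and the second term is the unavoidable extra ``skipping'' over non-edges.

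For the \emph{lower bound}, I would analyze a concrete greedy algorithm: maintain the pivot $v$; reveal the $\binom{?}{r-1}$ potential edges $\{v < \cdots\}$ in increasing order of their largest element; take the first one that is present; set the new pivot to be that largest element and continue. The displacement per step is then distributed as the position at which we first find $r-1$ ``new'' vertices all lying in a present edge — a computation yielding the denominator $r - p - r(1-p)^r + (pr+1)(1-p)^r p^{-2}$ as the expected consumed fraction per edge (the $(1-p)^r$ and $p^{-2}$ terms come from the probability an $r$-set spanning a given window is an edge, and from summing geometric tails over window sizes). Since each step's displacement depends only on freshly revealed edges, the process is a sum of (nearly) i.i.d. increments; a standard second-moment or Azuma concentration argument shows the pivot reaches $\ooi n$ after $\ooi n / (\text{expected increment})$ steps whp, giving the claimed lower bound.

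The main obstacle I anticipate is handling the \emph{conditioning/independence} carefully in the greedy exploration: when we reject an edge because it is absent, we learn that certain $r$-sets are non-edges, and these same $r$-sets could in principle be relevant at later steps — so the increments are not literally independent. The standard fix is to arrange the exploration so that each potential edge is queried at most once (e.g.\ only ever query edges whose minimum is the current pivot, and never revisit a pivot since pivots strictly increase), which does hold here because pivots form an increasing sequence; then the revealed edges at distinct steps involve disjoint query sets and the increments \emph{are} independent. A secondary technical point is the boundary effect near vertex $n$ (the last few steps have truncated windows), but since we only need an $\ooi$ statement this contributes a negligible $o(n)$ correction. Matching the constants in the two bounds to the stated expressions is then a routine but somewhat lengthy generating-function / geometric-series calculation.
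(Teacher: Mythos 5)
Your lower bound argument is essentially the paper's: the same greedy pivot process (always take the present edge of smallest length hanging off the current pivot), the same observation that strictly increasing pivots make the queried $r$-sets disjoint across steps so the increments are independent, the same per-step expectation $r-p-r(1-p)^r+(pr+1)(1-p)^rp^{-2}$ (which the paper derives as an upper bound on $\E X$ in its Lemma on edge lengths), and a concentration step at the end. That half is sound.

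The upper bound is where there is a genuine gap: neither mechanism you sketch delivers the constant $\frac{1}{r}+\frac{1}{r(r-2+p^{-1})}$. First, the union bound over vertex sequences ranges over $\binom{n}{\ell(r-1)+1}=e^{\Theta(n)}$ terms when $\ell=\Theta(n)$, not $e^{o(n)}$, and the first moment method provably gives a weaker constant: for $r=2$ and $p=1/2$ the claimed upper bound is exactly $3n/4$, yet the expected number of ordered paths of length $3n/4$ is $\binom{n}{3n/4+1}2^{-3n/4}=e^{\Theta(n)}\to\infty$ (the first-moment threshold sits near $0.77n$). Second, the per-edge ``minimum displacement'' heuristic fails because the optimal path chooses its pivots adaptively: it can preferentially route through vertices at which an edge of minimal length $r-1$ happens to be present, so you cannot charge every edge the typical greedy cost, and making the adversarial choice of pivots rigorous is exactly the missing content. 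The paper's actual argument is structurally different: it uses the deterministic inequality $\ell\le (n-1+x_{r-1})/r$, where $x_{r-1}$ is the number of edges of minimal length $r-1$ in the path, and then bounds $x_{r-1}$ by the maximum number of edges in a union of loose paths consisting solely of length-$(r-1)$ edges; that maximum is shown to be $(1+o(1))n/(r-2+p^{-1})$ via an exchange argument reducing to a left-to-right greedy, an induction on $n$ for its expectation, and McDiarmid. Some such global accounting of how many minimal-length edges a path can contain is needed; your sketch does not supply a substitute for it.
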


\begin{corollary}\label{cor:main1}
Let $0<p<1$ be a constant. Then, for every $\e>0$ there is an $r_0=r_0(p,\e)\in \mathbb{N}$ such that for each $r\ge r_0$, we have a.a.s.\
\[
\left(\frac{1}{r}-\e\right)n\le \ell_{\max} \le \left(\frac{1}{r}+\e\right)n.
\]
\end{corollary}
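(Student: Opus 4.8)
The plan is to derive Corollary~\ref{cor:main1} directly from Theorem~\ref{thm:main1} by tracking the behaviour of the two explicit bounds as $r\to\infty$ with $p\in(0,1)$ held fixed. First observe that a fixed constant $p\in(0,1)$ satisfies the hypothesis $\Omega(1)=p\le 1-o(1)$ of Theorem~\ref{thm:main1}, so for every fixed integer $r\ge 2$ the theorem supplies the stated upper and lower bounds on $\ell_{\max}$, with $o(1)$ terms that (for $r$ and $p$ fixed) are functions of $n$ alone.

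For the lower bound I would set
\[
D(r,p)\;=\;r-p-r(1-p)^r+(pr+1)(1-p)^rp^{-2}\;=\;(r-p)+(1-p)^r\!\left(\tfrac{(1-p)r}{p}+\tfrac1{p^2}\right),
\]
which is exactly the denominator in Theorem~\ref{thm:main1}. Since $p$ is a fixed constant, $(1-p)^r$ decays exponentially in $r$ while $\tfrac{(1-p)r}{p}+\tfrac1{p^2}$ grows only linearly, so the second summand tends to $0^+$; hence there is $r_1=r_1(p)$ with $0<D(r,p)\le r$ for all $r\ge r_1$, and therefore $1/D(r,p)\ge 1/r$. Fixing any $r\ge r_1$ and applying Theorem~\ref{thm:main1}, a.a.s.\ $\ell_{\max}\ge (1-o(1))\,n/D(r,p)\ge(1-o(1))\,n/r$; letting $n\to\infty$ (with $r,p$ fixed) gives a.a.s.\ $\ell_{\max}\ge\left(\tfrac1r-\e\right)n$.

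For the upper bound, note that $r-2+p^{-1}>0$ for every $r\ge 2$ (since $p^{-1}>1\ge 2-r$), while $\tfrac{1}{r(r-2+p^{-1})}\to 0$ as $r\to\infty$; so there is $r_2=r_2(p,\e)$ with $\tfrac{1}{r(r-2+p^{-1})}\le\e/2$ for all $r\ge r_2$. Fixing any $r\ge r_2$ and applying Theorem~\ref{thm:main1}, a.a.s.\ $\ell_{\max}\le(1+o(1))\,n\!\left(\tfrac1r+\tfrac{1}{r(r-2+p^{-1})}\right)\le(1+o(1))\,n\!\left(\tfrac1r+\tfrac\e2\right)$; letting $n\to\infty$ gives a.a.s.\ $\ell_{\max}\le\left(\tfrac1r+\e\right)n$. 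Taking $r_0:=\max\{r_1,r_2\}$ then works for both bounds simultaneously.

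The only point requiring care — and there is no deeper obstacle — is the order of quantifiers: the $o(1)$ errors in Theorem~\ref{thm:main1} refer to $n\to\infty$ with $r$ and $p$ fixed, so the argument must first fix $r\ge r_0$, verify the purely deterministic inequalities $1/D(r,p)\ge 1/r$ and $\tfrac{1}{r(r-2+p^{-1})}\le\e/2$ for the explicit functions of $r$ and $p$, and only afterwards pass to the $n\to\infty$ asymptotics; no uniformity in $r$ is needed because the corollary's ``a.a.s.'' is itself a statement for fixed $r$ and $p$. In short, the corollary is a soft consequence of Theorem~\ref{thm:main1}, resting only on the observation that the correction terms $r(1-p)^r$ and $(pr+1)(1-p)^rp^{-2}$ in the lower bound, and $\tfrac1{r(r-2+p^{-1})}$ in the upper bound, are all $o(1/r)$ as $r\to\infty$.
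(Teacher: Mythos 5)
Your proposal is correct and follows essentially the route the paper intends: the corollary is read off from Theorem~\ref{thm:main1} by checking that the correction terms in both bounds are $o(1/r)$ as $r\to\infty$ with $p$ fixed, and your handling of the quantifiers (fix $r$, then let $n\to\infty$) is exactly right. The only cosmetic difference is that the paper remarks the upper bound can be obtained even more cheaply from the deterministic inequality $\ell_{\max}\le n/(r-1)=n\left(\tfrac1r+\tfrac1{r(r-1)}\right)$, whereas you invoke the theorem's probabilistic upper bound; both work.
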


\begin{corollary}\label{cor:main2}
Let $r\ge 2$ and $p=1-o(1)$. Then, we have a.a.s.\
\[
\ell_{\max} = \frac{(1+o(1))n}{r-1}.
\]
\end{corollary}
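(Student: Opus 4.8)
The plan is to read off Corollary~\ref{cor:main2} from Theorem~\ref{thm:main1} by letting $p\to 1$. Write $q=1-p=o(1)$. In the lower bound of Theorem~\ref{thm:main1} the denominator is
\[
D(p)\;=\;r - p - r(1-p)^r+ (pr+1)(1-p)^rp^{-2}.
\]
Since $r\ge 2$ we have $(1-p)^r=q^r\le q^2=o(1)$, while $pr+1=O(1)$ and $p^{-2}=1+o(1)$, so the last two terms are $o(1)$; together with $r-p=r-1+q=(r-1)(1+o(1))$ this gives $D(p)=(r-1)(1+o(1))$, and hence the lower bound equals $\tfrac{(1+o(1))n}{r-1}$. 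For the upper bound, $p^{-1}=1+o(1)$ yields $r-2+p^{-1}=(r-1)(1+o(1))$, so
\[
\frac1r+\frac{1}{r(r-2+p^{-1})}\;=\;\frac1r+\frac{1+o(1)}{r(r-1)}\;=\;\frac{(r-1)+1+o(1)}{r(r-1)}\;=\;\frac{1+o(1)}{r-1},
\]
and multiplying by $(1+o(1))n$ shows the upper bound also equals $\tfrac{(1+o(1))n}{r-1}$. Combining the two bounds gives $\ell_{\max}=\tfrac{(1+o(1))n}{r-1}$ a.a.s.

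The only subtlety is that $p=1-o(1)$ is a function of $n$, so the substitution above is legitimate only if the $o(1)$ error terms in Theorem~\ref{thm:main1} are uniform over the admissible range of $p$. To avoid relying on this I would prefer to argue via monotonicity of $\ell_{\max}$ under the standard coupling of $H^{(r)}(n,p)$ in $p$. The upper bound then becomes immediate and unconditional: since $p\le 1$, deterministically $\ell_{\max}\le \ell_{\max}\big(H^{(r)}(n,1)\big)=\big\lfloor (n-1)/(r-1)\big\rfloor=(1+o(1))\tfrac{n}{r-1}$. For the lower bound, fix a constant $p_0\in(0,1)$; since $p\to 1$ we have $p\ge p_0$ for all large $n$, so $\ell_{\max}\big(H^{(r)}(n,p)\big)\ge \ell_{\max}\big(H^{(r)}(n,p_0)\big)$, and Theorem~\ref{thm:main1} applied at the genuine constant $p_0$ gives $\ell_{\max}\big(H^{(r)}(n,p_0)\big)\ge \tfrac{(1-o(1))n}{D(p_0)}$ a.a.s. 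As $p_0\uparrow 1$ we have $D(p_0)\to r-1$, so taking $p_0$ close enough to $1$ makes $\tfrac{1}{D(p_0)}$ as close to $\tfrac{1}{r-1}$ as desired; a routine ``for every $\e$'' argument then yields $\ell_{\max}\ge \tfrac{(1-o(1))n}{r-1}$ a.a.s.

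Since all of the real work is already done by Theorem~\ref{thm:main1}, there is no serious obstacle here. The main points to get right are the bookkeeping that both the lower and the upper expression in that theorem collapse to $\tfrac{1}{r-1}$ in the limit $p\to 1$, and the (routine but necessary) check that one may pass to that limit — which the monotonicity argument above handles cleanly without any uniformity hypothesis on the error terms.
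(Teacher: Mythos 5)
Your proposal is correct and matches the paper's approach: the corollary is read off from Theorem~\ref{thm:main1} by noting that both the lower-bound denominator and the upper-bound expression collapse to $r-1$ as $p\to 1$, and the paper likewise observes that the upper bound here is already the trivial $\lfloor (n-1)/(r-1)\rfloor$. Your extra monotonicity/coupling argument to sidestep uniformity of the $o(1)$ terms is a sensible refinement but not a different method.
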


In fact, the upper bound in these corollaries is trivial (and no upper bound from Theorem~\ref{thm:main1} is needed), since always the length of an ordered loose path is at most $\frac{n}{r-1} = n\left(\frac{1}{r} + \frac{1}{r(r-1)} \right)$. However, notice that the second term of the upper bound in Theorem~\ref{thm:main1} is always smaller than the trivial one, since $\frac{1}{r(r-2+p^{-1})} < \frac{1}{r(r-1)}$ when $p<1$.

\begin{theorem}\label{thm:main2}
Let $r\ge 2$ and $\sqrt{\log n}/n^{(r-1)/4}\ll p = o(1)$ or $\Omega((\log n)^{r-1}/n^{r-1})  = p \ll 1/n^{(r-1)/2}$. Then, a.a.s.\
\[
\ell_{\max} = \Theta(np^{1/(r-1)}).
\]
\end{theorem}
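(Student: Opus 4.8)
The plan is to prove matching upper and lower bounds of order $np^{1/(r-1)}$ on $\ell_{\max}$ under the two stated ranges of $p$, both of which are below the regime of Theorem~\ref{thm:main1} (where $p=\Omega(1)$). The governing heuristic is that an ordered loose path of length $\ell$ uses about $\ell(r-1)$ vertices and $\ell$ edges, and that the expected number of ordered loose paths of length $\ell$ is roughly $\binom{n}{\ell(r-1)+1} p^\ell \approx (n^{r-1} p / ((r-1)!))^{\ell} \cdot n / \ell!^{r-1}$ up to polynomial corrections; setting this to be not-too-small forces $\ell = O(n p^{1/(r-1)})$ when $n^{r-1}p \to \infty$ appropriately. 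So the two regimes in the hypothesis correspond to (i) $p$ small but $n^{r-1}p^{?}$ still large enough that long paths exist, and (ii) $p$ right around the threshold $n^{-(r-1)/2}$ where paths of length $\Theta(np^{1/(r-1)}) = \Theta(n^{1/2}\cdot(\text{polylog}))$ begin to appear.

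\textbf{Upper bound.} First I would prove $\ell_{\max} = O(np^{1/(r-1)})$ by a first-moment argument. Let $X_\ell$ be the number of ordered loose paths of length exactly $\ell$. Each such path is determined by an increasing sequence $v_1 < \dots < v_{\ell(r-1)+1}$, so $\E X_\ell = \binom{n}{\ell(r-1)+1} p^\ell$. Using $\binom{n}{\ell(r-1)+1} \le (en/(\ell(r-1)))^{\ell(r-1)+1}$ and optimizing, one sees $\E X_\ell \to 0$ as soon as $\ell \ge C n p^{1/(r-1)}$ for a suitable constant $C = C(r)$; the point is that $\binom{n}{\ell(r-1)} p^\ell \approx \big( (en/(\ell(r-1)))^{r-1} p\big)^\ell$, and the base of this exponential drops below $1$ precisely when $\ell \gtrsim n p^{1/(r-1)}$. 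Taking a union bound over all $\ell$ in the relevant window (there are only $O(n)$ values) and using that $\E X_\ell$ decays geometrically past the threshold gives $\Pr(\ell_{\max} \ge Cnp^{1/(r-1)}) = o(1)$. One must be slightly careful that in regime (ii), where $p$ can be as large as $n^{-(r-1)/2}$ times polylog, the threshold length is $\Theta(n^{1/2}\,\mathrm{polylog})$ and $\E X_\ell$ at the threshold is still genuinely small — this is where the $(\log n)^{r-1}$ and $\sqrt{\log n}$ factors in the hypothesis enter, ensuring the first moment is $o(1)$ with enough room for the union bound.

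\textbf{Lower bound.} For $\ell_{\max} = \Omega(np^{1/(r-1)})$ I would use a second-moment / absorption-style argument, or more robustly a greedy/branching-process argument building the path one edge at a time. The cleanest approach: build the path greedily from left to right. Having reached a current endpoint $v$ with, say, a constant fraction of $[n]$ still "unused" and lying above $v$, the number of available new edges extending the path through $v$ is a binomial with mean $\approx \binom{(1-o(1))n}{r-1} p = \Theta(n^{r-1} p)$; as long as this mean is $\gg 1$ the path can be extended, and tracking how fast $v$ advances through $[n]$ shows each extension moves $v$ forward by about $n/(n^{r-1}p)^{1/(r-1)} \cdot (\text{const}) = (\text{const})\, p^{-1/(r-1)}$ on average, giving a path of length $\Omega(n / p^{-1/(r-1)}) = \Omega(np^{1/(r-1)})$ before we exhaust $[n]$. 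Making this rigorous requires handling dependencies between successively revealed edges; the standard fix is to reveal edges in disjoint vertex-windows, or to use a concentration inequality (Azuma/McDiarmid or a direct Chernoff bound on a carefully defined exploration) so that the greedy process succeeds a.a.s. In regime (ii) near the threshold the greedy mean $n^{r-1}p$ is only polylog-large, so one needs the $\Omega((\log n)^{r-1}/n^{r-1})$ lower bound on $p$ to guarantee enough expected extensions at each of the $\Theta(n^{1/2})$ steps, again with room for a union bound over steps.

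\textbf{Main obstacle.} The hardest part will be the lower bound in regime (ii), the near-threshold window $p = \Theta(n^{-(r-1)/2}\,\mathrm{polylog})$: here the target path length is $\Theta(n^{1/2})$, which is short compared to $n$, so "plenty of room remains" is automatic, but the per-step branching factor $n^{r-1}p$ is only polylogarithmic, so the greedy exploration is delicate and the second moment of $X_\ell$ is not obviously close to $(\E X_\ell)^2$. I expect one needs either a sprinkling argument (expose $H$ in two rounds, build a moderately long path in the first and extend/absorb in the second) or a careful conditional-expectation martingale to push the exploration through; controlling the correlation between overlapping potential paths — two ordered loose paths can share a long common sub-path — is the crux of the second-moment computation and is where most of the technical work will lie.
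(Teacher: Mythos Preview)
Your upper bound by first moment is correct and is exactly the paper's argument (Section~\ref{sec:upper_bound}).

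For the lower bound, the paper uses two distinct arguments for the two ranges, and you have the right ingredients but have attached them to the wrong regimes. In the upper range $\sqrt{\log n}/n^{(r-1)/4}\ll p=o(1)$ (Section~\ref{sec:3}) the paper runs your greedy algorithm, but the concentration step is not Azuma/McDiarmid on $\ell_{\max}$: changing a single edge-family $W_i=\{e\in E_p:\max e=i\}$ can shift $\ell_{\max}$ by as much as $n$, so bounded differences fails outright. Instead one needs Warnke's typical-bounded-differences inequality (Lemma~\ref{warnke}) together with a patching lemma (Claim~\ref{claim:connector}) which says that on a high-probability event a path broken by removing one edge can be repaired at cost $c=O((\log n/p)^{1/(r-1)})$; the requirement that the resulting exponent $(\E K)^2/(nc^2)$ diverge is precisely the boundary $p\gg\sqrt{\log n}/n^{(r-1)/4}$ of this range, which your sketch does not explain. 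In the lower range $p\ll n^{-(r-1)/2}$ (Section~\ref{sec:second_moment}) the paper does not use greedy at all but runs the second-moment method directly on the path count, classifying overlapping pairs $(S,S')$ by the number $a$ of shared edges and the number $b$ of shared degree-$2$ vertices; the hypothesis $p\ll n^{-(r-1)/2}$ enters as the condition $\ell s/n=o(1)$ that kills the prefactor on the overlap sum.

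Your ``main obstacle'' paragraph is aimed at the wrong target: the window ``around $p\approx n^{-(r-1)/2}$'' with path length $\Theta(n^{1/2})$ that you flag as the hard case is precisely the \emph{gap} between the two ranges, handled separately (and with a log-factor loss) by Theorem~\ref{thm:main3}, not by this theorem. In the actual lower range of Theorem~\ref{thm:main2} the target length runs from $\Theta(\log n)$ up to $o(\sqrt n)$, the per-vertex edge count $n^{r-1}p$ ranges from polylogarithmic up to $n^{(r-1)/2-o(1)}$, and no sprinkling or martingale is needed --- the direct second-moment computation goes through cleanly.
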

(We write $A_n\ll B_n$ (resp. $B_n\gg A_n$) if $A_n/B_n\to 0$ as $n\to\infty$.)

The next theorem fills the gap for the missing range of $p$ from Theorem~\ref{thm:main2}. Unfortunately, this statement (likely the lower bound) is not optimal.
\begin{theorem}\label{thm:main3}
Let $r\ge 2$ and $\Omega(1/n^{(r-1)/2}) = p = O(\sqrt{\log n}/n^{(r-1)/4})$. Then, a.a.s.\
\[
\Omega(np^{1/(r-1)}/(\log n)^{1/(r-1)}) =  \ell_{\max} = O(np^{1/(r-1)}).
\]
\end{theorem}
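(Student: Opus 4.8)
The plan is to prove the upper bound by a routine first moment estimate and the lower bound by a greedy construction that advances the endpoint of the path in short blocks of vertices.

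\textbf{Upper bound.} An ordered loose path of length $\ell$ is determined by its increasing vertex sequence $v_1<\dots<v_{\ell(r-1)+1}$, and the resulting $\ell$ edges are pairwise distinct (consecutive ones meet in exactly one vertex, non-consecutive ones are disjoint), hence present independently. Therefore the expected number of ordered loose paths of length $\ell$ in $H$ equals $\binom{n}{\ell(r-1)+1}p^\ell$, and by Stirling this is at most $n\brac{\brac{\frac{en}{\ell(r-1)}}^{r-1}p}^\ell$. Taking $\ell=\ell_0:=\rdup{Cnp^{1/(r-1)}}$ for a constant $C=C(r)$ chosen so that $\brac{\frac{e}{C(r-1)}}^{r-1}\le\frac12$, the base of the exponential is at most $\frac12$, so this expectation is at most $n2^{-\ell_0}$. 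Since $p=\Omega(n^{-(r-1)/2})$ we have $\ell_0=\Omega(\sqrt n)\to\infty$, so $n2^{-\ell_0}=o(1)$; as any path of length exceeding $\ell_0$ contains a sub-path of length exactly $\ell_0$, Markov's inequality gives $\ell_{\max}<\ell_0=O(np^{1/(r-1)})$ a.a.s.

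\textbf{Lower bound.} Set $\delta:=\rdup{(K\log n/p)^{1/(r-1)}}$ for a large constant $K=K(r)$; note $\delta\to\infty$, and since $p=\Omega(n^{-(r-1)/2})\gg(\log n)/n^{r-1}$ we also have $\delta\le n/2$ for large $n$. Call a vertex $v\le n-\delta$ \emph{good} if there is an $(r-1)$-set $S\subseteq(v,v+\delta]$ with $\{v\}\cup S\in E_p$. For a fixed $v$ this fails with probability $(1-p)^{\binom{\delta}{r-1}}$, and using $\binom{\delta}{r-1}\ge(\delta/2)^{r-1}/(r-1)!\ge K\log n/(2^{r-1}(r-1)!\,p)$ (valid once $\delta\ge 2(r-1)$) this is at most $n^{-K/(2^{r-1}(r-1)!)}$. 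Choosing $K=2\cdot 2^{r-1}(r-1)!$ and taking a union bound over the at most $n$ vertices $v$ shows that a.a.s.\ every $v\le n-\delta$ is good; a similar, easier estimate shows that a.a.s.\ some $r$-set inside $[1,\delta+1]$ is an edge, giving a starting edge whose endpoint is at most $\delta+1$. On the intersection of these events we build an ordered loose path greedily: starting from the initial edge, whenever the current endpoint $w$ satisfies $w\le n-\delta$ we use goodness of $w$ to append one more edge, moving the endpoint into $(w,w+\delta]$. Each extension advances the endpoint by at most $\delta$, so we may perform at least $(n-1)/\delta-1$ extensions, producing an ordered loose path of length $\Omega(n/\delta)=\Omega(np^{1/(r-1)}/(\log n)^{1/(r-1)})$.

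The independence required by the greedy step is automatic: every potential edge inspected while testing goodness of a vertex $v$ has minimum element exactly $v$ (since $S\subseteq(v,v+\delta]$), so the families of edges inspected for distinct vertices are disjoint, and in particular the outcome at the current endpoint $w$ is independent of everything revealed along the path built so far. The main obstacle is really the $(\log n)^{1/(r-1)}$ gap between the two bounds: the block length $\delta$ cannot be pushed below $\Theta((\log n/p)^{1/(r-1)})$ while still surviving a union bound over the $\Theta(n/\delta)$ blocks, and removing the logarithm would seem to need a second moment computation over ordered loose paths of length $\Theta(np^{1/(r-1)})$ — precisely the regime where the pairwise overlaps between such paths become too large to control, which is why the theorem is stated with the weaker lower bound.
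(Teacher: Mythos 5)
Your proposal is correct and follows essentially the same strategy as the paper: the upper bound is the same first moment computation over increasing vertex sequences (the paper takes $\ell=\frac{4^{1/(r-1)}e}{r-1}np^{1/(r-1)}$ and uses $n^{1/\ell}\le e$, while you kill the polynomial prefactor via $\ell_0=\Omega(\sqrt n)$, which is available in this range of $p$), and the lower bound is the same greedy extension argument, with your ``goodness'' claim being exactly the paper's Claim 3.2 that a.a.s.\ every vertex $i\le n-d$ starts an edge of length at most $d=\Theta((\log n/p)^{1/(r-1)})$. Your closing remark about why the $(\log n)^{1/(r-1)}$ gap is hard to remove also matches the paper's concluding discussion.
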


The remaining (optimal) theorems describe results for the sparse regime.
\begin{theorem}\label{thm:main4}
Let $r\ge 3$ and $\omega=\omega(n)$ be such that $1\ll \omega \ll \log n$ and 
\[
\frac{1}{n^{r-1+1/\omega}} \le p = O((\log n)^{r-1}/n^{r-1}).
\]
Then, there exists $1\ll \ell_0 = O(\log n)$ such that 
\[
\left(\frac{n}{\ell_0}\right)^{r-1+1/\ell_0} p =1
\] 
and then a.a.s.\ $\ell_{\max}=\Theta(\ell_0)$.
\end{theorem}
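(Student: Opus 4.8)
The plan is to get the upper bound from a first‑moment estimate and the lower bound from a block decomposition together with a survival estimate for a branching‑type process. Set $\rho:=p\,n^{r-1}$, so the hypothesis reads $n^{-1/\omega}\le\rho\le O((\log n)^{r-1})$ and in particular $\log\rho=o(\log n)$. Raising $\left(\tfrac n{\ell_0}\right)^{r-1+1/\ell_0}p=1$ to the power $\ell_0$ and taking logarithms, the defining relation is $\ell_0 D=\log n-\log\ell_0$ with $D:=(r-1)\log\ell_0-\log\rho$. The map $\ell\mapsto(r-1+1/\ell)\log(n/\ell)+\log p$ is strictly decreasing on $(0,n)$, tends to $+\infty$ as $\ell\to0^+$, is positive at every fixed constant (as $\rho\ge n^{-1/\omega}$) and negative at $\ell=C\log n$ for a suitable $C=C(r)$ (as $\rho\le O((\log n)^{r-1})$); hence a unique such $\ell_0$ exists with $1\ll\ell_0=O(\log n)$, and $D\ge c_r>0$.

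For the upper bound, let $N_\ell$ count ordered loose paths of length $\ell$. Such a path is determined by an increasing $((r-1)\ell+1)$-tuple of vertices, which forces $\ell$ distinct edges, so $\E N_\ell=\binom n{(r-1)\ell+1}p^\ell\le \frac{n^{(r-1)\ell+1}}{((r-1)\ell+1)!}\,p^\ell$; and since truncating a path of length $\ell'$ to its first $\ell\le\ell'$ edges gives a path of length $\ell$, $\Pr[\ell_{\max}\ge\ell]=\Pr[N_\ell\ge1]\le\E N_\ell$. Using $m!\ge(m/e)^m$ and the identities for $\ell_0$, a routine computation gives $\log\E N_\ell\le\log\E N_{\ell_0}-\delta_0(\ell-\ell_0)+o(\ell_0)$ for $\ell\ge\ell_0$, where $\log\E N_{\ell_0}=(1-\log(r-1))(r-1)\ell_0+o(\ell_0)$ and $\delta_0=D+\Theta(1)\ge c_r/2$. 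Taking $\ell=\lceil A\ell_0\rceil$ with $A=A(r)$ a large enough constant makes this $\to-\infty$, so a.a.s.\ $\ell_{\max}<A\ell_0$ (for $r\ge4$ even $A=1+\varepsilon$ works, since then $1-\log(r-1)<0$).

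For the lower bound, fix a small $c=c(r)\in(0,1)$, put $\ell:=\lfloor c\ell_0\rfloor$, and split $[n]$ into consecutive blocks $B_1,\dots,B_\ell$ of size $N=n/\ell$. Reveal the edges of $H$ in rounds $0,1,\dots,\ell-1$ by ``type'': a type-$0$ edge lies inside $B_1$, and a type-$i$ edge has exactly one vertex in $B_i$ and its other $r-1$ vertices in $B_{i+1}$; these edge‑sets are disjoint, so each round is fresh. Let $R_1\subseteq B_1$ be the maxima of the present type-$0$ edges, and recursively let $v\in R_{i+1}$ iff some present type-$i$ edge has its $B_i$-vertex in $R_i$ and maximum $v$; chaining the witnessing edges shows $R_i\ne\emptyset$ implies an ordered loose path of length $i$, so it suffices to prove $R_\ell\ne\emptyset$ a.a.s. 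Given $R_i$, the events $\{v\in R_{i+1}\}$, $v\in B_{i+1}$, depend on disjoint type-$i$ edge sets, hence are independent, with $\Pr[v\in R_{i+1}\mid R_i]=1-(1-p)^{|R_i|\binom{\rho_v}{r-2}}$ ($\rho_v$ the rank of $v$ in $B_{i+1}$). Restricting to the top half of each block and putting $\beta:=\rho/(\ell^{r-1}(r-1)!)$, one gets $\E[|R_{i+1}|\mid R_i]\ge\min(\tfrac n{4\ell},c'\beta|R_i|)$ and $\E|R_1|\ge L_1:=n\rho/(c_1\ell^r)$; a Chernoff bound for sums of independent indicators then gives, with probability $1-o(1)$ over all $i$, that $|R_i|\ge\hat m_i:=\min(\tfrac n{8\ell},(c'\beta/2)^{i-1}L_1)$ for all $i\le\ell$, \emph{as long as} $|R_i|$ stays above the threshold $T=\Theta(\log n)/(c'\beta)$. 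The bounds $\log L_1\ge(\ell_0-1)D-O(1)$, $\log(1/\beta)\le D+O(1)$, $\log T\le D+O(\log\log n)$ (all consequences of $\ell_0 D=\log n-\log\ell_0$ and $\log\rho=o(\log n)$) then show $\hat m_{\ell-1}\ge T$ and hence $|R_\ell|\ge\Omega(\log n)>0$, provided $(1-c)\ell_0 D\gg\log\log n+O(c\ell_0)$, which holds for $c$ small and $n$ large; so $\ell_{\max}\ge c\ell_0$ a.a.s.

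The main obstacle is the lower bound when $\rho=o(1)$ (i.e.\ $p<n^{-(r-1)}$): there $\beta\to0$ and the process is genuinely subcritical, so surviving $\Theta(\ell_0)=\Theta(\log n/D)$ generations rests entirely on the enormous initial population $|R_1|=n^{1-o(1)}$, and the delicate part is the bookkeeping that keeps $|R_i|$ above $T$ for every $i\le\ell$ — i.e.\ balancing $\log L_1$, $\log(1/\beta)$ and $\log T$ against the generation count. (One could also use a second‑moment argument on $|R_\ell|$: conditional independence at each level makes $\E|R_i|^2/(\E|R_i|)^2$ increase by only $1/\E|R_i|$ per step, so it stays $1+o(1)$ once $\E|R_\ell|\to\infty$ and the per‑vertex probabilities remain linear — exactly the subcritical regime — with the supercritical regime handled directly by the growth of $|R_i|$.)
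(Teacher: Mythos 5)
Your existence-of-$\ell_0$ argument and your first-moment upper bound are essentially the paper's: the paper also gets $\ell_0$ from the Intermediate Value Theorem (testing $f$ at $\log\omega$ and at $d\log n$) and kills paths of length $\ell=\Theta(\ell_0)$ by computing $\E N_\ell$ from the defining relation; your decay-rate bookkeeping with $\delta_0=D+\Theta(1)$ is the same estimate organized differently. The lower bound is where you genuinely diverge. The paper reuses the second-moment machinery of Section 5: it takes $\ell=(\ell_0/4-1)/(r-1)$, checks $\E X=4^{\ell(r-1)+\ell/\ell_0}\gg 1$, and bounds the overlap sum $\sum_{a,b}n_{a,b}p^{2\ell-a}$ by $(\E X)^2\cdot O(\ell s/n)\cdot n^{\ell/\ell_0}=o((\E X)^2)$. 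You instead run a constructive block/branching argument, exposing edges by type so that each generation uses fresh randomness, and tracking the reachable sets $R_i$ with Chernoff. I checked the key quantitative claims and the argument does close: the type classes are disjoint so rounds are independent, the events $\{v\in R_{i+1}\}$ are conditionally independent across $v$, and the balance $\log L_1\ge(\ell_0-1)D-O(1)$, $\log(1/\beta)\le D+O(1)$, $\log T=o(\log n)$ together with $\ell_0D=\log n-\log\ell_0$, $D\ge c_r>0$, $D=o(\log n)$ and $c|\log c|\to 0$ gives $\hat m_{\ell-1}\ge T$ for $c$ small. The second-moment route is shorter here only because the $n_{a,b}$ enumeration was already set up for Theorem 2; your route is more constructive (it exhibits the path) at the cost of delicate population bookkeeping over $\Theta(\ell_0)$ generations. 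One small imprecision: $L_1=n\rho/(c_1\ell^r)$ can exceed the block size $n/\ell$ (e.g.\ when $\ell_0=\Theta(\log n)$ and $\rho=\Theta((\log n)^{r-1})$), so the bound on $\E|R_1|$ should carry the cap $n/(8\ell)$ as well --- harmless, since your $\hat m_i$ already takes that minimum.
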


\begin{theorem}\label{thm:main5}
Let $r\ge 3$ and $\ell\ge 2$ be an integer. If $1/n^{r-1+1/\ell} \ll p \ll 1/n^{r-1+1/(\ell+1)}$, then a.a.s. $\ell_{\max} = \ell$. 

Furthermore, let $c>0$ be a real number and $p=c/n^{r-1+1/\ell}$. Let $X$ count the number of ordered paths of length~$\ell$. Then, $X$ converges in distribution to $Po(\lambda)$, where $\lambda = c^\ell/(\ell(r-1)+1)!$.
\end{theorem}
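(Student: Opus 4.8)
The plan is to prove both assertions by moment methods. Write $X_k$ for the number of ordered loose paths of length $k$ in $H$. Since an ordered loose path of length $k$ is completely determined by its set of $k(r-1)+1$ vertices (listing them in increasing order forces all $k$ edges), and since these $k$ edges are pairwise distinct (consecutive ones meet in one vertex, non-consecutive ones are disjoint) and hence present independently, the first moment is exact:
\[
\E[X_k]=\binom{n}{k(r-1)+1}p^k=(1+o(1))\,\frac{n}{(k(r-1)+1)!}\,(n^{r-1}p)^k.
\]
In particular $\E[X_{\ell+1}]\to 0$ exactly when $p\ll n^{-(r-1+1/(\ell+1))}$, while $\E[X_\ell]\to\infty$ when $p\gg n^{-(r-1+1/\ell)}$, and for $p=c/n^{r-1+1/\ell}$ one has $\E[X_\ell]\to c^{\ell}/(\ell(r-1)+1)!=\lambda$.

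The upper bound $\ell_{\max}\le\ell$ a.a.s.\ is then immediate from Markov's inequality applied to $X_{\ell+1}$, since any ordered loose path of length larger than $\ell+1$ contains an ordered loose path of length $\ell+1$ as an initial segment. For the matching lower bound $\ell_{\max}\ge\ell$ a.a.s.\ I would apply the second moment method to $X:=X_\ell$. Splitting $\E[X^2]=\sum_{P,P'}\Pr[P\text{ and }P'\text{ are both paths}]$ according to whether the candidate paths $P,P'$ share an edge, the edge-disjoint pairs contribute at most $\E[X]^2$ by independence, so it suffices to show
\[
\Delta:=\max_{P}\ \sum_{\text{$P'$ shares $\ge 1$ edge with $P$}}\Pr[\,P'\text{ is a path}\mid P\text{ is a path}\,]=o(\E[X]),
\]
for then $\E[X^2]\le\E[X]^2+\E[X]\,\Delta=(1+o(1))\E[X]^2$ and Chebyshev's inequality gives $\Pr[X=0]\to 0$. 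To bound $\Delta$, fix $P$ and classify the overlapping $P'$ by the number $b\ge 1$ of shared edges and the number $c\ge 1$ of maximal runs of consecutive shared edges; a run of $j$ edges spans $j(r-1)+1$ vertices, so the shared part occupies exactly $b(r-1)+c$ vertices (this count is the same seen inside $P$ or inside $P'$), and there are only $O_{\ell,r}(1)$ choices for which edges are shared and how the runs sit inside $P'$. For each such choice there are at most $O(n^{(\ell-b)(r-1)+1-c})$ ways to pick the remaining vertices of $P'$, each contributing probability $p^{\ell-b}$; dividing by $\E[X]=\Theta(n^{\ell(r-1)+1}p^{\ell})$ gives a ratio $O\big(n^{-c}(n^{r-1}p)^{-b}\big)$. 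Since $b=\ell$ would force $P'=P$ we have $b\le\ell-1$, and as $n^{r-1}p\gg n^{-1/\ell}$ this ratio is $o(n^{b/\ell-c})=o(n^{-1/\ell})$; summing the $O(1)$ cases yields $\Delta=o(\E[X])$.

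For the Poisson limit at $p=c/n^{r-1+1/\ell}$ I would use the method of factorial moments: it suffices to show $\E[(X)_k]\to\lambda^{k}$ for each fixed $k\ge 1$, where $(X)_k=X(X-1)\cdots(X-k+1)$, since $\mathrm{Po}(\lambda)$ is the unique distribution with these factorial moments. Write $\E[(X)_k]=\sum_{(P_1,\dots,P_k)}\Pr[P_1,\dots,P_k\text{ are all paths}]$, the sum over ordered $k$-tuples of distinct candidates. The tuples whose $P_i$ are pairwise vertex-disjoint number $(1+o(1))\binom{n}{\ell(r-1)+1}^{k}$, and each contributes $p^{k\ell}$, so their total is $(1+o(1))\E[X]^{k}\to\lambda^{k}$. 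Every remaining tuple I would handle by grouping tuples by the isomorphism type of the union hypergraph $U=\bigcup_i E(P_i)$ together with its decomposition into $k$ loose paths: there are $O_{k,\ell,r}(1)$ such types, a type has at most $n^{v(U)}$ embeddings into $[n]$, and each embedding contributes $p^{e(U)}$. The key structural fact to establish is that $v(U)-(r-1+1/\ell)\,e(U)<0$ whenever the $P_i$ are not pairwise vertex-disjoint: adding the paths one at a time, the first contributes $\ell(r-1)+1-(r-1+1/\ell)\ell=0$, each later one contributes at most $0$, and strictly negative as soon as the path being added meets the union built so far in a vertex while still bringing at least one new edge — which can be arranged at the second step by placing first a pair of paths sharing a vertex and using that two distinct ordered loose paths of the same length have distinct edge sets. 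Hence each of the $O(1)$ non-disjoint types contributes $O(n^{v(U)}p^{e(U)})=O(n^{v(U)-(r-1+1/\ell)e(U)})=o(1)$, so $\E[(X)_k]=(1+o(1))\E[X]^{k}+o(1)\to\lambda^{k}$, which yields the Poisson convergence and completes the proof.

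I expect the main obstacle to be the overlap bookkeeping common to both moment computations: one must check that every way two or more ordered loose paths can overlap in edges and vertices is genuinely of lower order, the extremal case being a near-total overlap in $b=\ell-1$ edges, where the saving is only $n^{-1/\ell}$. By contrast, the first-moment threshold estimates, the reduction of Poisson convergence to factorial moments, and the identity $\E[X_\ell]\to\lambda$ are all routine.
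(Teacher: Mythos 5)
Your proposal is correct and follows essentially the same route as the paper: a first-moment bound on paths of length $\ell+1$ for the upper threshold, the second-moment method (with the same edge-overlap/run bookkeeping, just parametrized slightly differently) for the a.a.s.\ lower bound, and convergence of factorial moments for the Poisson limit. In fact your verification of the factorial moments, via the deficiency count $v(U)-(r-1+1/\ell)\,e(U)<0$ for non-vertex-disjoint configurations, supplies details that the paper only sketches.
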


Observe that if in Theorem~\ref{thm:main4} we allow $\omega = \Theta(1)$, then the lower bound on $p$ is considered in Theorem~\ref{thm:main5}.
Also if $\omega = \Omega(\log n)$, then 
\[
p\ge \frac{1}{n^{r-1+1/\Omega(\log n)}} = \frac{1}{n^{r-1}} \frac{1}{n^{1/\Omega(\log n)}} = \Omega\left( \frac{1}{n^{r-1}} \right),
\]
which is contained in the range of $p$ in Theorem~\ref{thm:main4}.

\subsection{Outline of the paper}
Lemma \ref{lem:geometric} of Section \ref{pre} deals with the stochastic properties of edge length. This is followed by a technical lemma and the statement of a concentration inequality.

We then divide the proof as shown in Table~\ref{tab:structure}. We have not managed to deal with small $p$, i.e., $p=o((\log n)^{r-1}/n^{r-1})$.

\renewcommand{\arraystretch}{1.3}
\begin{table}
\begin{tabular}{|c|c|c|}
\hline
\text{\textbf{Section}}&\textbf{Bound}&\textbf{Range of }$p$ \\
\hline
\ref{sec:3}&\text{Lower} & $\sqrt{\log n}/n^{(r-1)/4}\ll p \le 1-o(1)$\\
 \ref{sec:4}&\text{Lower} & $\Omega(1/n^{(r-1)/2}) = p = O(\sqrt{\log n}/n^{(r-1)/4})$\\
\ref{sec:5}&\text{Lower} & $\Omega((\log n)^{r-1}/n^{r-1}) = p \ll 1/n^{(r-1)/2}$\\ 
\ref{sec:6}&\text{Upper} & $p=\Omega((\log n)^{r-1}/n^{r-1})$\\
\ref{sec:7}&\text{Lower/Upper}& $\frac{1}{n^{r-1+1/\omega}} \le p \le O((\log n)^{r-1}/n^{r-1})\text{ or }1 \ll \omega \ll \log n$\\
\ref{sec:8a}&\text{Lower/Upper}& $1/(n^{r-1+1/\ell}) \ll p \ll 1/(n^{r-1+1/(\ell+1)})\text{ where  }\ell=O(1)$\\
\hline
\end{tabular}
\caption{The structure of the proofs.}
\label{tab:structure}
\end{table}

\section{Preliminaries}\label{pre}
To bound the length of the longest ordered loose path from below, it is natural to consider a greedy algorithm, which extends an ordered loose path from its last vertex $i$ by a hyperedge whose least vertex is $i$ and whose ``length'' $j-i$, where $j$ is its last vertex, is as small as possible.

We will use the following lemma (about random hypergraphs of edges with common left endpoints) when considering the properties of such a greedy extension algorithm.
\begin{lemma}\label{lem:geometric}
  Let $H$ be a random $r$-hypergraph on the vertex set $\mathbb{Z}^+$ such that each edge $\{1<i_2<\dots<i_{r-1}<i_{r}\}$ appears with probability~$p$. Let $X_i$ (for $i\ge 0$) be a random variable, which equals $r+i-1$ if (I) $\{1<i_2<\dots<i_{r-1} < i_r=r+i\}$ is present in~$H$ and (II) no edge of the form $\{1<j_2<\dots<j_{r-1}<j_r \leq r+i-1\}$ is present in $H$, {and is 0 otherwise}. Define $X=\sum_{i=0}^\infty X_i$. Then:
\begin{enumerate}[(i)]

\item\label{prop:geometric:i} The expected value equals
\[
\E X=\sum_{i=0}^\infty (r+i-1) (1-p)^{\binom{r-2+i}{r-1}} \left( 1-\left(1-p \right)^{\binom{r-2+i}{r-2}} \right).
\]

\item\label{prop:geometric:ii} The expected value satisfies
\[
r -p - r(1-p)^r \le \E X \le  r -p - r(1-p)^r+ (pr+1)(1-p)^rp^{-2}
\]
and, in particular, for $p=\Omega(1)$, which is independent from $r$,
\[
\E X = r-p+o_r(1),
\]
where $o_r(1)$ approaches zero as $r$ tends to infinity.

\item\label{prop:geometric:iii} For $p=o(1)$,
\[
\E X = \Theta(1/p^{1/(r-1)}).
\]
\end{enumerate}
\end{lemma}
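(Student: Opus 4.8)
The plan is to compute $\E X$ exactly by a first--moment argument, observe that the resulting series telescopes via Pascal's identity, and then read off all three parts from the closed form. For part (i), since the $X_i$ are nonnegative, monotone convergence gives $\E X=\sum_{i\ge 0}\E X_i=\sum_{i\ge0}(r+i-1)\Pr[X_i\neq 0]$. The event $\{X_i\neq 0\}$ is the intersection of (I) and (II); (I) involves only the potential edges whose largest vertex equals $r+i$, while (II) involves only those whose largest vertex is at most $r+i-1$, so the two events are independent. There are $\binom{r-2+i}{r-2}$ potential edges through both $1$ and $r+i$ (choose the other $r-2$ vertices from $\{2,\dots,r+i-1\}$) and $\binom{r-2+i}{r-1}$ potential edges through $1$ lying inside $\{1,\dots,r+i-1\}$, so $\Pr[(\mathrm{I})]=1-(1-p)^{\binom{r-2+i}{r-2}}$ and $\Pr[(\mathrm{II})]=(1-p)^{\binom{r-2+i}{r-1}}$, which is the claimed formula. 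The series converges because $\binom{r-2+i}{r-1}$ is a degree-$(r-1)$ polynomial in $i$, so the summand decays faster than any geometric sequence.

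For part (ii) I set $q=1-p$ and $a_i=\binom{r-2+i}{r-1}$. Pascal's identity gives $a_i+\binom{r-2+i}{r-2}=a_{i+1}$, so the $i$-th term of the series in part (i) equals $(r+i-1)(q^{a_i}-q^{a_{i+1}})$; summation by parts, using $a_0=\binom{r-2}{r-1}=0$ and $(r+N-1)q^{a_{N+1}}\to0$, collapses the series to
\[
\E X=(r-1)+\sum_{i\ge1}q^{a_i}=r-p+\sum_{i\ge2}q^{\binom{r-2+i}{r-1}},
\]
where the last step splits off the $i=1$ term $q^{\binom{r-1}{r-1}}=q$. Every remaining term is nonnegative, so $\E X\ge r-p\ge r-p-rq^r$, which is the lower bound. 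For the upper bound, $\binom{r-2+i}{r-1}\ge r-2+i$ for $i\ge2$ (that is, $\binom{m}{r-1}\ge m$ whenever $m\ge r$), hence $\sum_{i\ge2}q^{\binom{r-2+i}{r-1}}\le\sum_{j\ge0}q^{r+j}=q^r/p$, and it remains only to check the elementary inequality $q^r/p\le -rq^r+(pr+1)q^rp^{-2}$, which after dividing by $q^r$ and multiplying by $p^2$ reduces to $p\bigl(1-r(1-p)\bigr)\le1$ --- true since the left-hand side is at most $p\le1$ when $1-r(1-p)\ge0$ and negative otherwise. Finally, if $p=\Omega(1)$ with the implied constant independent of $r$, then $q^r/p\to0$ as $r\to\infty$, giving $\E X=r-p+o_r(1)$.

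For part (iii) let $p=o(1)$. Using $q^{a}\le e^{-pa}$ and $\binom{r-2+i}{r-1}\ge(i/(r-1))^{r-1}$ for $i\ge1$,
\[
\sum_{i\ge1}q^{a_i}\le\sum_{i\ge1}e^{-p(i/(r-1))^{r-1}}\le\int_0^\infty e^{-p(x/(r-1))^{r-1}}\,dx=O\bigl(p^{-1/(r-1)}\bigr),
\]
which with $\E X=(r-1)+\sum_{i\ge1}q^{a_i}$ yields the upper bound. For the lower bound I would take $M=\lfloor c_r\,p^{-1/(r-1)}\rfloor$ with a small constant $c_r>0$ chosen so that $a_M\le(2M)^{r-1}/(r-1)!\le\frac1{2p}$ (valid once $M\ge r-2$, i.e.\ for $p$ small); then Bernoulli's inequality gives $q^{a_i}=(1-p)^{a_i}\ge1-pa_i\ge\frac12$ for $1\le i\le M$, so $\sum_{i\ge1}q^{a_i}\ge M/2=\Omega\bigl(p^{-1/(r-1)}\bigr)$, and since $r-1=O(1)$ is negligible next to $p^{-1/(r-1)}\to\infty$ this gives $\E X=\Theta\bigl(p^{-1/(r-1)}\bigr)$. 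The computations throughout are elementary; the one step that repays attention is the Pascal--telescoping in part (ii), which replaces the unwieldy series by the clean closed form $\E X=r-p+\sum_{i\ge2}(1-p)^{\binom{r-2+i}{r-1}}$ from which both bounds drop out, while in part (iii) the only subtlety is remembering that the $\Theta(\cdot)$ constants are permitted to depend on the fixed parameter $r$, so that the crude comparison of $\binom{r-2+i}{r-1}$ with $i^{r-1}$ and the Gamma-type integral bound suffice.
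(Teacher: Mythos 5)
Your proof is correct, and for parts (ii) and (iii) it takes a genuinely different and cleaner route than the paper. Part (i) is identical in substance. For (ii) and (iii) the paper works directly with the series $\sum_{i\ge0}(r-1+i)\,q_i p_i$: in (ii) it evaluates $\E X_0+\E X_1=r-p-r(1-p)^r$ exactly and dominates the tail $\sum_{i\ge2}\E X_i$ by $\sum_{i\ge2}(r-1+i)(1-p)^{r-2+i}=(pr+1)(1-p)^rp^{-2}$, and in (iii) it splits the sum at $i\approx (r-1)p^{-1/(r-1)}$ and controls the two pieces with binomial estimates and a Gamma-function integral. You instead collapse the series via Pascal's identity and Abel summation to the closed form $\E X=(r-1)+\sum_{i\ge1}(1-p)^{\binom{r-2+i}{r-1}}$ --- equivalently, the tail-sum formula $\E L=\sum_{k\ge1}\Pr(L\ge k)$ for the nonnegative integer variable $L=X$, since $\Pr(L\ge r-1+i)=(1-p)^{\binom{r-2+i}{r-1}}$. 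From this the lower bound $r-p$ in (ii) is immediate, your (sharper) upper bound $r-p+(1-p)^r/p$ follows from $\binom{r-2+i}{r-1}\ge r-2+i$, and the reduction of the comparison with the stated bound to $p\bigl(1-r(1-p)\bigr)\le 1$ checks out; part (iii) then needs only a monotone integral comparison for the upper bound and the observation that the first $\Theta(p^{-1/(r-1)})$ summands each exceed $1/2$ for the lower bound. I verified your closed form against the paper's Table~\ref{tab:EX} and against the exact value $\E X=1/p$ for $r=2$; it is correct. What your route buys is brevity and a transparent formula; what the paper's buys is that the specific constants in the statement fall out directly, whereas you must separately verify that your tighter bound implies the stated one (which you do). Two trivial caveats: the telescoping needs $p>0$ so that $(r+N-1)(1-p)^{a_{N+1}}\to0$, and for $r=2$ the summand decays only geometrically rather than ``faster than any geometric sequence,'' which of course still suffices for convergence.
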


\begin{proof}
We divide the proof into three parts.

\medskip
\textbf{Part \eqref{prop:geometric:i}:}
For $i=0$, clearly, $\E X_0 = (r-1)q_0p_0$, where $q_0=1$ and $p_0=p$. 

Now consider $i=1$. Here the edge $\{1,\dots,r\}$ is not present (this happens with probability $q_1 =1-p_0=1-p$) and an edge $e=\{1<i_2<\dots<i_{r-1}<r+1\}$ is present. Since we have exactly $\binom{r-1}{r-2}$ choices for vertices $i_2<\dots<i_{r-1}$, the latter occurs with probability $p_1=1-\left(1-p\right)^{\binom{r-1}{r-2}}$. Thus, $\E X_{1} = r q_1 p_1$.

Assume in general that $q_i$ is the probability of the event that no edge $\{1<i_2<\dots<i_{r-1}<r+j\}$ for $0\le j\le i-1$ is present. Moreover, let $p_i$ be the probability of the event that there is an edge of the form $\{1<i_2<\dots<i_{r-1}<r+i\}$. Observe that 
\[
q_i = q_{i-1}(1-p_{i-1}) \quad\text{ and }\quad p_i=1-\left(1-p \right)^{\binom{r-2+i}{r-2}}.
\]
Thus,
\[
q_i = \prod_{j=0}^{i-1} (1-p_j)
= \prod_{j=0}^{i-1} \left(1-p \right)^{\binom{r-2+j}{r-2}}
= (1-p)^{\sum_{j=0}^{i-1} \binom{r-2+j}{r-2}}
= (1-p)^{\binom{r-2+i}{r-1}}
\]
and
\[
\E X_i = (r-1+i) (1-p)^{\binom{r-2+i}{r-1}} \left( 1-\left(1-p \right)^{\binom{r-2+i}{r-2}} \right). 
\]

Consequently,
\[
\E X=\sum_{i=0}^\infty \E X_i = \sum_{i=0}^\infty (r-1+i) (1-p)^{\binom{r-2+i}{r-1}} \left( 1-\left(1-p \right)^{\binom{r-2+i}{r-2}} \right).
\]

\medskip
\textbf{Part \eqref{prop:geometric:ii}:}
We split the sum in the expression for $\E X$ into two terms: $\E X_0+\E X_1$ and $\sum_{i=2}^\infty \E X_i$. Observe that 
\[
\E X_0 + \E X_1 = (r-1)p + r(1-p)\left(1-(1-p)^{r-1}\right) = r -p - r(1-p)^r
\]
and
\[
\sum_{i=2}^\infty \E X_i \le \sum_{i=2}^\infty (r-1+i) (1-p)^{r-2+i} = (pr+1)(1-p)^rp^{-2}.
\]
Since $\E X_0 + \E X_1 \le \E X = \E X_0 + \E X_1 + \sum_{i=2}^\infty \E X_i$, we obtain
\[
r -p - r(1-p)^r \le \E X \le  r -p - r(1-p)^r+ (pr+1)(1-p)^rp^{-2}.
\]
Finally notice that both $\lim_{r\to\infty} r(1-p)^r = 0$ and $\lim_{r\to\infty} (pr+1)(1-p)^rp^{-2} = 0$ for any $p=\Omega(1)$ which is independent from $r$.
This yields $\E X = r-p+o_r(1)$.

\medskip
\textbf{Part \eqref{prop:geometric:iii}:}
For $r=2$ observe that 
\[
\E X=\sum_{i=0}^\infty (1+i) (1-p)^{i} p= p \sum_{i=0}^\infty (1+i) (1-p)^{i} = p\cdot \frac{1}{p^2} = 1/p,
\]
as required.
Therefore, we may assume that $r\ge 3$. We split the sum into two terms:
\[
S_1=\sum_{0\le i\le (r-1)/p^{1/(r-1)}} (r-1+i) (1-p)^{\binom{r-2+i}{r-1}} \left( 1-\left(1-p \right)^{\binom{r-2+i}{r-2}} \right),
\]
and
\[
S_2=\sum_{(r-1)/p^{1/(r-1)}<i} (r-1+i) (1-p)^{\binom{r-2+i}{r-1}} \left( 1-\left(1-p \right)^{\binom{r-2+i}{r-2}} \right).
\]
Observe that
\begin{align*}
S_1 &\le \sum_{0\le i\le (r-1)/p^{1/(r-1)}} (r-1+i) \left( 1-\left(1-p \right)^{\binom{r-2+i}{r-2}} \right)\\
&\le \sum_{0\le i\le (r-1)/p^{1/(r-1)}} (r-1+i) \left( 1-\left(1-p {\binom{r-2+i}{r-2}}\right) \right)\\
&= p \sum_{0\le i\le (r-1)/p^{1/(r-1)}} (r-1+i) \binom{r-2+i}{r-2}\\
&\le p \sum_{0\le i\le (r-1)/p^{1/(r-1)}} (r-1+i) (r-2+i)^{r-2}\\ 
&\le p \cdot \left((r-1)/p^{1/(r-1)} +1\right)\left(r-1+(r-1)/p^{1/(r-1)}\right)^{r-1} = O(1/p^{1/(r-1)}).
\end{align*}
Now
\begin{align*}
S_2 &= \sum_{(r-1)/p^{1/(r-1)}<i} (r-1+i) (1-p)^{\binom{r-2+i}{r-1}} \left( 1-\left(1-p \right)^{\binom{r-2+i}{r-2}} \right)\\
&\le \sum_{(r-1)/p^{1/(r-1)}<i} (r-1+i) \exp\left\{-p\binom{r-2+i}{r-1}\right\} \left( 1-\left(1-p {\binom{r-2+i}{r-2}}\right) \right)\\
&= p\sum_{(r-1)/p^{1/(r-1)}<i} (r-1+i) \binom{r-2+i}{r-2} \exp\left\{-p\binom{r-2+i}{r-1}\right\}\\
&\le p\sum_{(r-1)/p^{1/(r-1)}<i} (r-1+i) \left(\frac{e(r-2+i)}{r-2}\right)^{r-2} \exp\left\{-p\left(\frac{r-2+i}{r-1}\right)^{r-1}\right\}\\
\end{align*}
And after the change of variables $i:=r-2+i$,
\begin{align*}
S_2
&\le p\left(\frac{e}{r-2}\right)^{r-2}\sum_{(r-1)/p^{1/(r-1)}+r-2<i} (i+1) i^{r-2} \exp\left\{-p\left(\frac{1}{r-1}\right)^{r-1} i^{r-1}\right\}\\
&\le 2p\left(\frac{e}{r-2}\right)^{r-2}\sum_{(r-1)/p^{1/(r-1)}+r-2<i} i^{r-1} \exp\left\{-p\left(\frac{1}{r-1}\right)^{r-1} i^{r-1}\right\},
\end{align*}
since $i+1\le 2i$.
Consider $f(x) = x^{r-1}\exp(-cx^{r-1})$ for $x> 0$. It is easy to check that $f$ is positive and  takes the maximum at point $1/c^{1/(r-1)}$. Furthermore, $f$ is a strictly decreasing function for $x> 1/c^{1/(r-1)}$. Furthermore,
\[
\int_0^\infty x^{r-1}\exp(-cx^{r-1}) dx= \frac{\Gamma(r/(r - 1))}{c^{r/(r-1)}(r - 1)},
\]
where $\Gamma(z)=\int_0^\infty e^{-t}t^{z-1}$ is the \emph{gamma function}
(see integral~3.326 in~\cite{tables}).
Let $c=p\left(\frac{1}{r-1}\right)^{r-1}$. Then, 
\[
1/c^{1/(r-1)} = (r-1)/p^{1/(r-1)}. 
\]
Thus,
\begin{align*}
S_2 &\le 2p\left(\frac{e}{r-2}\right)^{r-2}\sum_{(r-1)/p^{1/(r-1)}+r-2<i} i^{r-1} \exp\left\{-p\left(\frac{1}{r-1}\right)^{r-1} i^{r-1}\right\}\\
&\le 2p\left(\frac{e}{r-2}\right)^{r-2} \int_{0}^\infty x^{r-1} \exp\left\{-p\left(\frac{1}{r-1}\right)^{r-1} x^{r-1}\right\}dx\\
&= 2p\left(\frac{e}{r-2}\right)^{r-2} \frac{\Gamma(r/(r - 1))}{c^{r/(r-1)}(r - 1)}\\
&= 2p\left(\frac{e}{r-2}\right)^{r-2} \frac{(r - 1)^{r-1}\Gamma(r/(r - 1))}{p^{r/(r-1)}} = O(1/p^{1/(r-1)}).
\end{align*}
Hence, 
\[
\E X = S_1 + S_2 = O(1/p^{1/(r-1)}).
\]

It remains to show the lower bound. Observe that
\[
(1+x)^k\le 1+ \frac{kx}{1-(k-1)x} =\frac{1+x}{1-(k-1)x}\quad \text{for} \quad 0\leq x(k-1)<1 \quad \text{and}\quad k\ge 1.
\]
Thus,
\[
1-\left(1-p \right)^{\binom{r-2+i}{r-2}}
\ge 1-\left(1-\frac{p {\binom{r-2+i}{r-2}}}{1+p\left({\binom{r-2+i}{r-2}}-1\right)}\right) 
=\frac{p {\binom{r-2+i}{r-2}}}{1+ p\left({\binom{r-2+i}{r-2}}-1\right)}.
\]
Moreover, for $p=o(1)$ and $0\le i\le 1/p^{1/(r-1)}$ we get
\[
1+p\left({\binom{r-2+i}{r-2}}-1\right) \le 1+p(r-2+i)^{r-2} = O(p^{1/(r-1)}) = o(1) \le 2.
\]
Hence,
\[
1-\left(1-p \right)^{\binom{r-2+i}{r-2}} \ge p {\binom{r-2+i}{r-2}}/2.
\]

Also, since $1-x \ge e^{-2x}$ for $0\le x\le 1/2$, we obtain for $p=o(1)$ and $0\le i\le 1/p^{1/(r-1)}$
\[
(1-p)^{\binom{r-2+i}{r-1}} \ge \exp\left\{-2p \binom{r-2+i}{r-1} \right\}
\ge \exp\left\{-2p (r-2+i)^{r-1} \right\} = \Omega(1).
\] 

Therefore, using $(1-a)^b\geq 1-ab$ for $0\leq a\leq 1$ and $b>0$, we see that
\begin{align*}
\E X &\ge \Omega(p) \sum_{0\le i\le 1/p^{1/(r-1)}} (r-1+i)\binom{r-2+i}{r-2} \\
&\ge \Omega(p) \sum_{0\le i\le 1/p^{1/(r-1)}} i^{r-1}\\
&\ge \Omega(p) \frac{1}{p^{1/(r-1)}} \left( \frac{1}{p^{1/(r-1)}} \right)^{r-1} = \Omega(1/p^{1/(r-1)}),
\end{align*}
finally yielding $\E X = \Theta(1/p^{1/(r-1)})$.
\end{proof}

In Table~\ref{tab:EX} we present the specific values of this expectation in case when $p=1/2$ and $r\in\{2,\dots,10\}$. We can see that it is growing closer to $r-1/2$.

\begin{table}
\begin{tabular}{ |c|c c c c c c c c c| } 
 \hline
$r$ & 2 & 3& 4& 5& 6& 7& 8& 9& 10\\
 \hline
$\E X$  & 2 & 2.642... & 3.563...& 4.531...& 5.516...& 6.509...& 7.504...& 8.502...& 9.501...\\
\hline
\end{tabular}
\caption{$\E X$ from Lemma~\ref{lem:geometric} for $p=1/2$ and $r\in\{2,\dots,10\}$.}
\label{tab:EX}
\end{table}

\begin{remark}
For $r=2$ the random variable $i$ has the geometric distribution with probability of success $p$. Indeed, in this case $X_i=1+i$ can be viewed as $i$ failures (occurring with probability $q_i=(1-p)^i$) before the first success happens (with probability $p_i=p$) for each $i\ge 0$.
\end{remark}

We will also need an easy auxiliary result. 
\begin{claim}\label{claim:connector}
Let $p=\Omega((\log n)/n^{r-1})$ and
\[
a=
\begin{cases}
\frac{4}{p} \log n & \text{ if } r=2,\\
\left(\frac{4(r-2)^{r-2}}{p} \log n \right)^{1/(r-1)} & \text{ if } r\ge3. 
\end{cases}
\]
Then, for $r=2$ and any disjoint subsets $A_1,  A_2 \subseteq [n]$ with $|A_1|=|A_2|=a\geq 20$, with probability $1-O(n^{-20})$ there is in $H^{(2)}(n, p)$ an edge $e$ such that $|A_1\cap e| = |A_2\cap e| = 1$.
Moreover, for $r\ge 3$ and any disjoint subsets $A_1, A_2, A_3 \subseteq [n]$ with $|A_1|=|A_2|=|A_3|=a$, with probability $1-O(n^{-20})$ there is in $H^{(r)}(n, p)$ an edge $e$ such that $|A_1\cap e| = |A_2\cap e| = 1$ and $|A_3\cap e| = r-2$. 
\end{claim}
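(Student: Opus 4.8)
The plan is to prove the claim by a direct first-moment computation that exploits the independence of distinct potential edges of $H^{(r)}(n,p)$. Fix disjoint sets $A_1,A_2$ (and $A_3$, when $r\ge 3$), each of size $a$ — or, since $a$ need not be an integer, of size $\lceil a\rceil\ge a$; only a lower bound on the count below is needed, and enlarging the sets only helps. Call an $r$-subset $e$ of $[n]$ \emph{good} if it realises the prescribed intersection pattern: $|e\cap A_1|=|e\cap A_2|=1$ when $r=2$, and in addition $|e\cap A_3|=r-2$ when $r\ge 3$. Because the $A_i$ are pairwise disjoint and the prescribed sizes sum to $r$, a good edge lies inside $A_1\cup A_2$ (resp.\ $A_1\cup A_2\cup A_3$) and is specified by choosing one vertex of $A_1$, one of $A_2$, and $r-2$ vertices of $A_3$; distinct choices give distinct $r$-sets. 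Hence the number $N$ of good $r$-subsets is $N=a^2$ if $r=2$ and $N=a^2\binom{a}{r-2}$ if $r\ge 3$, and these $N$ subsets appear in $H$ independently, each with probability $p$. So the probability that $H$ contains no good edge is exactly $(1-p)^N\le e^{-pN}$, and the whole claim reduces to verifying $pN\ge 20\log n$ for $n$ large.

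For $r=2$ I would compute $pN=pa^2=16(\log n)^2/p\ge 16(\log n)^2$ (using $p\le 1$), which exceeds $20\log n$ as soon as $\log n\ge 5/4$; thus $(1-p)^{a^2}\le e^{-20\log n}=n^{-20}$. The hypothesis $a\ge 20$ is not needed for this estimate; it only serves to make the two sets fit disjointly in $[n]$ in the intended application.

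For $r\ge 3$ I would first note that, since $r$ is a fixed constant and $a=(4(r-2)^{r-2}p^{-1}\log n)^{1/(r-1)}\to\infty$ with $n$ (indeed $a\ge(4\log n)^{1/(r-1)}$ because $p\le1$), we may assume $a\ge\max\{r-2,5\}$, so that $\binom{a}{r-2}\ge(a/(r-2))^{r-2}=a^{r-2}/(r-2)^{r-2}$. Then
\[
pN\ \ge\ \frac{p\,a^{r}}{(r-2)^{r-2}}\ =\ a\cdot\frac{p\,a^{r-1}}{(r-2)^{r-2}}\ =\ 4a\log n\ \ge\ 20\log n,
\]
where the middle equality uses $a^{r-1}=4(r-2)^{r-2}p^{-1}\log n$. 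Hence again $(1-p)^{N}\le e^{-pN}\le n^{-20}$, which proves the claim.

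I do not expect a genuine obstacle here: the argument is elementary. The only points that need a moment's care are the standard binomial estimate $\binom{a}{r-2}\ge(a/(r-2))^{r-2}$ and the verification that the threshold conditions ($a\ge r-2$, $a\ge 5$, $\log n\ge 5/4$) all hold for sufficiently large $n$ — which is immediate because $r$ is fixed and $a\to\infty$.
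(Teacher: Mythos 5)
Your computation is correct and is essentially the inner computation of the paper's own proof: the same count $N=a^2\binom{a}{r-2}$ of candidate edges, the same estimate $\binom{a}{r-2}\ge (a/(r-2))^{r-2}$, and the same identity $p\,a^{r-1}=4(r-2)^{r-2}\log n$ yielding $pN\ge 4a\log n$. The one substantive difference is a quantifier. The paper bounds the probability that \emph{some} triple of disjoint $a$-sets fails to admit a connecting edge, via a union bound over the $\binom{n}{a}^3\le n^{3a}$ choices of $(A_1,A_2,A_3)$; this uniform version is what is actually invoked in Section~\ref{sec:3}, where $A_1,A_2,A_3$ are constructed from the (random) path $P$ and so cannot be fixed before $H$ is revealed. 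Your per-triple argument proves the claim as literally stated, and your failure probability is really $e^{-pN}\le n^{-4a}$ rather than merely $n^{-20}$, so the union bound costs nothing: $n^{3a}\cdot n^{-4a}=n^{-a}\le n^{-20}$. This is also where the hypothesis $a\ge 20$ is used in the paper --- to conclude $n^{-a}\le n^{-20}$ after the union bound --- rather than, as you guessed, only to make the sets fit disjointly in $[n]$; for $r\ge 3$ one has $a\to\infty$, so the same conclusion holds for large $n$. I would add that one union-bound line so the claim can be applied to adaptively chosen sets.
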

\begin{proof}
We prove the statement for $r\ge 3$. (The case $r=2$ is very similar.)
Observe that the probability that there are $A_1, A_2, A_3$ with no edge described above is at most
\begin{align*}
\binom{n}{a}^3 (1-p)^{a^2 \binom{a}{r-2}} 
&\le n^{3a} \exp\left\{-pa^2 \binom{a}{r-2}\right\}\\
&= \exp\left\{3a\log n-pa^2 \binom{a}{r-2}\right\}\\
&\le \exp\left\{3a\log n-pa^2 a^{r-2}/(r-2)^{r-2}\right\}\\
&= \exp\left\{a\left(3\log n-pa^{r-1}/(r-2)^{r-2}\right)\right\}\\
&\le \exp\left\{a(3\log n-4\log n)\right\} = n^{-a}\le n^{-20}.
\end{align*}
\end{proof}
We will need the following concentration result of Warnke~\cite{W}:
\begin{lemma}[Warnke~\cite{W}]\label{warnke}
Let $W=(W_1,W_2,\ldots,W_n)$ be a family of independent random variables with $W_i$ taking values in a set $\Lambda_i$. Let $\Omega=\prod_{i\in[n]}\Lambda_i$ and suppose that $\Gamma\subseteq \Omega$ and $f:\Omega\to{\bf R}$ are given. Suppose also that whenever $\bx,\bx'\in \Omega$ differ only in the $i$-th coordinate 
\[
|f(\bx)-f(\bx')|\leq \begin{cases}c_i&if\ \bx\in\Gamma.\\d_i&otherwise.\end{cases}
\]
If $Y=f(X)$, then for all reals $\gamma_i>0$ and $t$,
\begin{equation}\label{Wineq}
\Pr(Y\geq \E(Y)+t)\leq \exp\set{-\frac{t^2}{2\sum_{i\in[n]}(c_i+\gamma_i(d_i-c_i))^2}}+\Pr(W\notin \Gamma)\sum_{i\in [n]}\gamma_i^{-1}.
\end{equation}
\end{lemma}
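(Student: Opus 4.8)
\medskip
\noindent\textbf{Proof proposal.}
This is Warnke's ``method of typical bounded differences'', and the plan is to run the standard exponential (Azuma--Hoeffding--McDiarmid) martingale argument, paying an additive price for the exceptional set $\Gamma$. First I would form the Doob martingale $Z_k=\E\big[f(W)\mid W_1,\dots,W_k\big]$, so that $Z_0=\E Y$ and $Z_n=Y$, and write $Y-\E Y=\sum_{k=1}^n D_k$ with $D_k=Z_k-Z_{k-1}$ a mean-zero martingale difference given $W_1,\dots,W_{k-1}$. By the Chernoff bound, \eqref{Wineq} reduces to bounding $\E\exp\{\lambda(Y-\E Y)\}=\E\prod_{k=1}^n e^{\lambda D_k}$ for each $\lambda>0$, peeling the factors off one coordinate at a time and then optimising $\lambda$ against $t$.

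The hypothesis on $\Gamma$ enters when estimating $\E\big[e^{\lambda D_k}\mid W_1,\dots,W_{k-1}\big]$. If $f$ were globally $c_k$-Lipschitz in its $k$-th coordinate, then $|D_k|\le c_k$ and Hoeffding's lemma would give $\E\big[e^{\lambda D_k}\mid W_1,\dots,W_{k-1}\big]\le e^{\lambda^2c_k^2/2}$, producing the bound with $\Gamma=\Omega$ and no correction term. The obstacle --- and the step I expect to be the main difficulty --- is that $f$ is only $c_k$-Lipschitz on $\Gamma$, while the event $\{W\in\Gamma\}$ is not measurable with respect to $W_1,\dots,W_{k-1}$, so one cannot condition on it at stage $k$. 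The plan to circumvent this is to pass to an auxiliary function $\tilde f\colon\Omega\to\mathbb{R}$ chosen so that: (i) $\tilde f$ is \emph{globally} Lipschitz in coordinate $i$ with constant $\tilde c_i:=c_i+\gamma_i(d_i-c_i)$, and (ii) $\tilde f\le f$ everywhere while $\{\tilde f\ne f\}$ is contained in an event of probability at most $\Pr(W\notin\Gamma)\sum_i\gamma_i^{-1}$. Equivalently, one ``thickens'' $\Gamma$ coordinate by coordinate --- declaring stage $k$ good when the conditional probability, over the as-yet-unrevealed coordinates, of leaving $\Gamma$ is at most $\gamma_k$, so that on the good part $|D_k|\le c_k+(d_k-c_k)\gamma_k=\tilde c_k$ and always $|D_k|\le d_k$ --- and controls the probability that some stage is bad by Markov's inequality. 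This is exactly where the free parameters $\gamma_i$ and the term $\sum_i\gamma_i^{-1}$ are born, and constructing the modification (or the thickening) with precisely the blended constants $c_i+\gamma_i(d_i-c_i)$ and the claimed probability bound is the technical heart of the argument.

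Granting this reduction, the conclusion is routine. Applying the ordinary bounded-differences inequality to $\tilde f$ (equivalently, multiplying out the per-stage Hoeffding estimates along the martingale) gives $\Pr\big(\tilde f\ge\E\tilde f+t\big)\le\exp\big\{-t^2\big/\big(2\sum_i\tilde c_i^{\,2}\big)\big\}$. Since $\tilde f\le f$ we have $\E\tilde f\le\E Y$, and since $\tilde f=f$ off an event $N$ with $\Pr(N)\le\Pr(W\notin\Gamma)\sum_i\gamma_i^{-1}$,
\[
\{Y\ge\E Y+t\}\ \subseteq\ \{\tilde f\ge\E\tilde f+t\}\ \cup\ N .
\]
Taking probabilities and recalling $\tilde c_i=c_i+\gamma_i(d_i-c_i)$ yields~\eqref{Wineq}. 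Everything except the construction in the second paragraph is the textbook martingale computation.
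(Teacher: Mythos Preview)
The paper does not prove this lemma at all: it is stated as a black-box citation to Warnke~\cite{W}, with no proof or sketch given. Your outline is a faithful summary of Warnke's own argument (construct a globally Lipschitz minorant $\tilde f$ with blended constants $c_i+\gamma_i(d_i-c_i)$, bound the failure probability by Markov, then apply the standard bounded-differences inequality), so there is nothing in the paper to compare it against.
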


\section{Lower bound for $\sqrt{\log n}/n^{(r-1)/4}\ll p \le 1-o(1)$.}\label{sec:3}
We present a lower bound by applying a greedy algorithm, that we denote by GREEDY. This will imply the lower bound in Theorem~\ref{thm:main1}, and the lower bound for the upper range of $p$ in Theorem~\ref{thm:main2}.

Let $e=\{i_1<\dots<i_r\}$ be an edge. We define the \emph{length} of $e$ as $\ell(e)=i_r-i_1$. We will greedily construct a loose path $e_1,e_2,\dots$. We start at the vertex $1$ and choose the first present edge $e_1=\{1<i_2<\dots<i_{r-1}<i_r\}$, that means, an edge of the smallest length that contains vertex~1. If there is more than one edge of minimum length containing vertex~1, we break ties arbitrarily.
Now we repeat the process starting at the vertex $i_r$ and find the first edge $e_2$ that starts at $i_r$, etc. Let $L_i,i\geq 1$ be independent copies of the random variable $X$ defined in Lemma~\ref{lem:geometric}. Now define the random variable $K$ by
\[
L_1+L_2+\dots +L_K \le n-1 \quad \text{ and }\quad L_1+L_2+\dots +L_{K+1} > n-1,
\]
In which case $K$ is the number of edges in the path constructed by GREEDY. We have
\[
(\E K)(\E X)\le n-1 \quad \text{ and }\quad (\E (K+1))(\E X)>n-1
\]
implying that 
\begin{equation}\label{XK}
\frac{n-1}{\E X}-1\leq \E K\leq \frac{n-1}{\E X}
\end{equation}

We let $W_i$ denote the set of edges of $H^{(r)}(n, p)$ that contain vertex $i$ and are contained in $[i]$. We note that the $W_i$ determine $H^{(r)}(n, p)$ and that they are an independent family. Let the random variable $Y$ denote the maximum length of an ordered path in~$H^{(r)}(n, p)$, so that $Y$ is determined by $W_1,W_2,\ldots,W_n$. Clearly, $Y\ge K$ and $\E Y \ge \E K$.
We will show by using Lemma~\ref{warnke} that $Y$ is tightly concentrated around its expectation. This will imply that a.a.s.\ $Y \ge (1+o(1))\E K$. We will have to estimate $| Y(W_1,\dots,W_i,\dots,W_n)-Y(W_1,\dots,W_i',\dots,W_n) |$ for all $W,i,W_i'$.

Now we will check the assumptions of Lemma~\ref{warnke}. For a given value of $Y=y$ let $P$ be a loose path of length~$y$. Suppose first that $i\in P$ and that edge $e\in (P\cap W_i)\setminus W_i'$. By removing edge $e$ we split $P$ into two paths $P_1$ and $P_2$ such that $V(P_1) \subseteq [i-x]$ and $V(P_2) \subseteq \{i-x+r-1,\dots,n\}$ for some $0\leq x\leq r-1$. Define $c=3a$, where $a$ is defined in Claim~\ref{claim:connector}. 

Suppose first that $|E(P_1)|\ge c$ and $|E(P_2)|\ge c$. 
Let $m_i = |E(P_i)|$, where clearly $m_1+m_2=y-1$. Let $E(P_i) = \{e_1^i,\dots,e_{m_i}^i\}$, where $e_j^i$ precedes $e_{j+1}^i$. Denote by $first(e)$ and $last(e)$ the first and the last vertex of edge~$e$, respectively. Thus, $last(e_j^i) \leq first(e_{j+1}^i)$.
Define
\[
A_1 = \{last(e_j^1): m_1-2a+1\le j\le m_1-a\} \quad \text{and} \quad A_2=\{first(e_j^2):1\le j\le a\}
\]
and $A_3$ be any subset of $\bigcup_{j=m_1-a+1}^{m_1} e_j^1$ of size $a$ which does not contain $last(e^1_{m_1-a})$. Due to Claim~\ref{claim:connector} there exists an edge $f$ such that $f=\{last(e_{j_1}^1)<\dots<first(e_{j_2}^2)\}$ with $m_1-2a+1\le j_1\le m_1-a$ and $1\le j_2\le a$. Thus, $P_1$ and $P_2$ together with $f$ contain a path of length at least $y-c$
yielding $|Y(W_1,\dots,W_i,\dots,W_n)-Y(W_1,\dots,W_i',\dots,W_n) | \le c$. 

Suppose now  that $|E(P_1)| \le c$ or $|E(P_2)|\le c$. Now $P_1$ or $P_2$ have length at least $y-c$. If there is a path $Q$ when $W'$ is added of length $z$ and $Q_1,Q_2$ are obtained by deleting some $e'\in (Q\cap W_i')\setminus W_i$ then either one of $Q_1,Q_2$ has at least $z-c$ edges or we can use the argument for large $P_1,P_2$ to show that $y\geq z-c$. Thus in all cases $|Y(W_1,\dots,W_i,\dots,W_n)-Y(W_1,\dots,W_i',\dots,W_n) | \le c$.

It remains to consider the case in which $i$ is not in $P$. We may assume that $W_i'$ consists of all possible edges. By adding these edges we can create a longer path. Clearly, here we can argue by the symmetric argument that we cannot create a path with length greater than $K+c$.

Now we apply Lemma~\ref{warnke} with $\Gamma$ equal to the set of $W$ satisfying the conditions in Claim~\ref{claim:connector} and $c_i=c$ for $W\in\Gamma$ and $d_i=n$ and $\gamma_i=n^{-4}$, where  
\[
c_i=c=3a=
\begin{cases}
\frac{12}{p} \log n & \text{ if } r=2,\\
3\left(\frac{4(r-2)^{r-2}}{p} \log n \right)^{1/(r-1)} & \text{ if } r\ge3. 
\end{cases}
\]
Thus, for $p\ge\omega\sqrt{\log n}/n^{(r-1)/4}$ (where $\omega=\omega(n)$ represents a function that tends to infinity arbitrarily
slowly) and $t=(\E Y)/\omega^{1/(r-1)}$ we get
\begin{align*}
\Pr\brac{|Y -\E Y| \ge \frac{\E Y}{\omega^{1/(r-1)}}} &\le 2 \exp\left\{-\frac{(\E Y)^2}{2\omega^{2/(r-1)} (n c^2+3cn^{-2})}\right\}+O(n^{-12})\\
&\le 2 \exp\left\{-\frac{(\E K)^2}{2\omega^{2/(r-1)} (n c^2+3cn^{-2})}\right\}+O(n^{-12}).
\end{align*}
Now notice that by Lemma~\ref{lem:geometric}\eqref{prop:geometric:iii} the order of magnitude of the exponent satisfies
\[
\frac{(\E K)^2}{3\omega^{2/(r-1)} n c^2}
\ge \frac{n}{3\omega^{2/(r-1)} c^2 (\E X)^2}
= \frac{n}{3\omega^{2/(r-1)} \left(\frac{\log n}{p}\right)^{2/(r-1)} \left(\frac{1}{p}\right)^{2/(r-1)}}
= \frac{np^{4/(r-1)}}{3\omega^{2/(r-1)} (\log n)^{2/(r-1)}}.
\]
Recall that $\E Y \ge \E K$.
Hence, for $p\ge\omega\sqrt{\log n}/n^{(r-1)/4}$, 
\[
\frac{(\E Y)^2}{\omega^{2/(r-1)} n c^2} = \Omega(\omega^{2/(r-1)})
\]
implying that $\Pr(|Y -\E Y| \ge (\E Y)/\omega^{1/(r-1)})=o(1)$  and so a.a.s.\ $Y\sim \E Y$.

\section{Lower bound for $\Omega(1/n^{(r-1)/2}) = p = O(\sqrt{\log n}/n^{(r-1)/4})$.}\label{sec:4}

Let $\ell=O(n(p/\log n)^{1/(r-1)})$, where the hidden constant is sufficiently small. We will greedily build a path of length $\ell$ by applying Claim~\ref{claim:length_e} (below). Observe that 
\[
d: = \frac{n}{\ell} = \Omega\left( \frac{n}{n(p/\log n)^{1/(r-1)}} \right) =\Omega\left(\left(\frac{\log n}{p} \right)^{1/(r-1)}\right).
\]

\begin{claim}\label{claim:length_e}
Let $p=\Omega((\log n)/n^{r-1})$ and $(r-1)\left(\frac{2\log n}{p} \right)^{1/(r-1)} \le d \le n$. Then, a.a.s.\ for each $i\in[n-d]$ there is an edge~$e\in E_p$ of length at most $d$ that starts at $i$. This means there is a vertex $j\le i+d$ such $e=\{i<i_2<\dots<i_{r-1}<j\}\in E_p$.
\end{claim}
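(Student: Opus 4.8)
The plan is a straightforward first-moment (union bound) argument. Fix $i\in[n-d]$. An edge of length at most $d$ starting at $i$ is exactly an edge of the form $\{i<i_2<\dots<i_{r-1}<j\}$ with $j\le i+d$; equivalently, the remaining $r-1$ vertices $i_2<\dots<i_{r-1}<j$ are chosen from the $d$-element set $\{i+1,\dots,i+d\}$, which is contained in $[n]$ precisely because $i\le n-d$. So there are exactly $\binom{d}{r-1}$ candidate edges (a count that does not depend on $i$), each present independently with probability $p$. Hence the probability that no such edge exists is $(1-p)^{\binom{d}{r-1}}\le \exp\{-p\binom{d}{r-1}\}$.

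Next I would bound $\binom{d}{r-1}$ from below by $(d/(r-1))^{r-1}$, using the standard inequality $\binom{d}{k}\ge (d/k)^k$; note $d\ge r-1$ is guaranteed by the hypothesis, since $2\log n/p\ge 1$ for $n$ large (as $p\le 1$). Using the assumed lower bound $d\ge (r-1)(2\log n/p)^{1/(r-1)}$, i.e.\ $(d/(r-1))^{r-1}\ge 2\log n/p$, this gives $p\binom{d}{r-1}\ge 2\log n$, so the failure probability at a fixed $i$ is at most $\exp\{-2\log n\}=n^{-2}$.

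Finally, take a union bound over the at most $n$ choices of $i\in[n-d]$: the probability that some $i$ has no edge of length at most $d$ starting at it is at most $n\cdot n^{-2}=n^{-1}=o(1)$. Hence a.a.s.\ every $i\in[n-d]$ admits such an edge, as claimed.

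There is no genuine obstacle here. The only points requiring a little care are that the number of candidate edges is exactly $\binom{d}{r-1}$ and that all of them lie inside $[n]$ — which is why the statement is restricted to $i\le n-d$ — after which everything reduces to the elementary estimate $\binom{d}{r-1}\ge (d/(r-1))^{r-1}$ and arithmetic. The extra slack (obtaining $n^{-2}$ rather than merely $o(1/n)$ per vertex) is deliberate, so that the same event can later be intersected with other high-probability events via further union bounds.
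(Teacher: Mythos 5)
Your proposal is correct and is essentially the paper's own argument: a per-vertex failure probability of $(1-p)^{\binom{d}{r-1}}\le \exp\{-p(d/(r-1))^{r-1}\}\le n^{-2}$ followed by a union bound over $i$. The only cosmetic difference is that the paper folds the union bound and exponential estimate into a single displayed chain rather than isolating the $n^{-2}$ bound per vertex.
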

\begin{proof}
Observe that for a fixed vertex $i$, the probability that there is no edge of the form $\{i<i_2<\dots<i_{r-1}<i_r\}$ with $i_r\le i+d$ is exactly $(1-p)^{\binom{d}{r-1}}$. Hence, the probability that there exists a vertex $i$ with no such edges is at most
\[
n (1-p)^{\binom{d}{r-1}} \le \exp\left\{\log n - p\binom{d}{r-1}\right\}
\le \exp\left\{\log n - p\left(\frac{d}{r-1}\right)^{r-1}\right\} = o(1).
\]
\end{proof}
Now we explain our greedy algorithm. First we reveal all $r$-tuples starting at vertex $1$. By Claim~\ref{claim:length_e} there is an edge $e_1$ of length at most $d$ starting at this vertex. Now we reveal all $r$-tuples that start at the last vertex of $e_1$, etc. This way we see that a.a.s.\ there is a path of length at least~$\ell$. This establishes the lower bound in Theorem~\ref{thm:main3}.

\section{Lower bound for $\Omega((\log n)^{r-1}/n^{r-1}) = p \ll 1/n^{(r-1)/2}$.}\label{sec:second_moment}\label{sec:5}

Let $\ell =  \frac{1}{8(r-1)}np^{1/(r-1)}-\frac{1}{r-1}$.
We will apply the second moment method. For a fixed $S\in \binom{[n]}{\ell(r-1)+1}$, let $X_S$ be an indicator random variable which equals 1 if $S$ induces an ordered path of length~$\ell$ denoted by $P_S$; otherwise 0. Define $X=\sum_{S\in \binom{[n]}{\ell(r-1)+1}} X_S$.
Clearly,
\begin{align*}
\E X &= \binom{n}{\ell(r-1)+1}p^\ell
\ge \left(\frac{n}{\ell(r-1)+1}\right)^{\ell(r-1)+1}p^\ell\\
&\ge \left(\frac{n}{\ell(r-1)+1}\right)^{\ell(r-1)}p^\ell
= \left(\left(\frac{n}{\ell(r-1)+1}\right)^{r-1}p\right)^{\ell} = 8^{\ell(r-1)},
\end{align*}
which tends to infinity.

Now we calculate $\sum_{S\neq S'} \E (X_SX_S')$ for $s:=\ell(r-1)+1=|S|=|S'|$. First observe that 
\[
\sum_{|S \cap S'|=0} \E (X_SX_{S'}) \le \binom{n}{s}p^\ell \binom{n}{s}p^\ell = (\E X)^2.
\]
Similarly, if $|S \cap S'|\ge 1$ and $P_S$ and $P_{S'}$ share no edges,
\begin{align*}
\sum_{|S \cap S'|\ge 1} \E (X_SX_S') 
&\le \binom{n}{s}p^\ell \binom{n}{s-1}p^\ell =\binom{n}{s}p^\ell \binom{n}{s}\frac{s}{n-s+1}  p^\ell \\
& = (\E X)^2 \frac{s}{n-s+1} = o((\E X)^2).
\end{align*}

It remains to consider the case when the paths induced by $S$ and $S'$ share some edges. Let $n_{a,b}$ be the number of ordered pairs $(S,S')$ such that $|E(P_S) \cap E(P_{S'})|=a$ and the number of vertices of degree 2 induced by $E(P_S) \cap E(P_{S'})$ is exactly~$b$. For example, if $E(P_S) \cap E(P_{S'})$ is just one path, then $b=a-1$, or if $E(P_S) \cap E(P_{S'})$ is a matching of size~$a$, then~$b=0$. In general, it is easy to observe that $E(P_S) \cap E(P_{S'})$ consists of $c:=a-b$ vertex disjoint paths. 

First we will consider the following problem. Let $P$ be a path of length $\ell$. We calculate in how many ways we can choose vertex-disjoint paths $P_1,\dots,P_c$ contained in~$P$ such that $|E(P_1)|+\dots+|E(P_c)|=a$ (this will also imply that the number of vertices of degree 2 in $P_i$'s is exactly $b$). Let $x_i=|E(P_i)|$ and $y_i$ be a gap (in terms of the number of edges) between $P_i$ and $P_{i+1}$. Any choice of $P_i$'s can be encoded as an integer solution of 
\[
x_1+\dots+x_c = a \quad\text{ and }\quad y_0+\dots+y_c=\ell-a,
\]
where $x_i\ge 1$ for $1\le i\le c$ and $y_0\ge 0$, $y_c\ge 0$, $y_j\ge 1$ for $1\le j\le c-1$. This is equivalent to solving
\[
(x_1-1)+\dots+(x_c-1) = a-c 
\]
and
\[
\ y_0+(y_1-1)+(y_2-1)+\dots+(y_{c-1}-1)+y_c=\ell-a-c+1
\]
with all $x_{1}-1, \ldots, x_{c}-1, y_{0}, y_{1}-1, \ldots, y_{c-1}- 1, y_{c}$ nonnegative.
Thus, the number of integer solutions is exactly
\[
\binom{a-1}{c-1} \binom{\ell-a+1}{c} \le 2^a \ell^{c} = 2^a \ell^{a-b}.
\]

Now notice that as $s=\ell(r-1)+1$ and $|S \cap S'|=a r-b$,
\begin{align*}
n_{a,b} &\le \binom{n}{s} \cdot 2^a \ell^{a-b}\binom{n}{s-(ar-b)}\\
&= \binom{n}{s} \cdot 2^a \ell^{a-b}\binom{n}{s} \frac{s-(ar-b)+1}{n-s+1}\cdots \frac{s-1}{n-s+(ar-b-1)}
\frac{s}{n-s+(ar-b)}\\
&\le \binom{n}{s} \binom{n}{s} 2^a \ell^{a-b}\left(\frac{s}{n-s}\right)^{ar-b}.
\end{align*}
Hence,
\begin{align*}
\sum_{1\le a \le \ell}\; \sum_{0\le b \le a-1} n_{a,b} p^{2\ell-a}
&\le \sum_{1\le a \le \ell}\; \sum_{0\le b \le a-1} (\E X)^2  2^a\ell^{a-b} \left(\frac{s}{n-s}\right)^{ar-b} p^{-a}\\
&= (\E X)^2 \sum_{1\le a \le \ell}\; 2^a\ell^{a} \left(\frac{s}{n-s}\right)^{ar} p^{-a} \sum_{0\le b \le a-1}  \ell^{-b} \left(\frac{s}{n-s}\right)^{-b}. 
\end{align*}

Since $p \ll 1/n^{(r-1)/2}$, it follows that $\ell^2 \ll n$ and so $\ell s \ll n-s$. Thus, $\frac{n-s}{\ell s}-1 \ge \frac{n}{2\ell s}$ and
\[
\sum_{0\le b \le a-1}  \ell^{-b} \left(\frac{s}{n-s}\right)^{-b}
= \sum_{0\le b \le a-1}   \left(\frac{n-s}{\ell s}\right)^{b}
= \frac{\left(\frac{n-s}{\ell s}\right)^{a}-1}{\frac{n-s}{\ell s}-1}
\le \frac{\left(\frac{n-s}{\ell s}\right)^{a}}{\frac{n}{2\ell s}}.
\]
Hence,
\begin{align}
\sum_{1\le a \le \ell}\; \sum_{0\le b \le a-1} n_{a,b} p^{2\ell-a}
&\le (\E X)^2 \frac{2\ell s}{n} \sum_{1\le a \le \ell}\; 2^a\ell^{a} \left(\frac{s}{n-s}\right)^{ar} p^{-a} \left(\frac{n-s}{\ell s}\right)^{a}\notag\\
&= (\E X)^2 \frac{2\ell s}{n} \sum_{1\le a \le \ell}\; \left( 2\left(\frac{s}{n-s}\right)^{r-1} p^{-1} \right)^a.\label{eq:second_moment}
\end{align}

Recall that $s=\ell(r-1)+1 = \frac{1}{8(r-1)} np^{1/(r-1)}$ and $s\ll n$. Thus, 
\[
2\left(\frac{s}{n-s}\right)^{r-1} p^{-1} 
\le 2\left(\frac{2s}{n}\right)^{r-1} p^{-1} 
= 2\left(\frac{1}{4(r-1)}\right)^{r-1} \le 1/2
\]
and
\[
\sum_{1\le a \le \ell}\; \left(2 \left(\frac{s}{n-s}\right)^{r-1} p^{-1} \right)^a 
\le \sum_{1\le a \le \ell}\; 1/2^a \le 1.
\]
Consequently,
\[
\sum_{1\le a \le \ell}\; \sum_{0\le b \le a-1} n_{a,b} p^{2\ell-a} \le (\E X)^2 \frac{2\ell s}{n} = o((\E X)^2).
\]

Summarizing, we have shown that 
\[
\frac{\E X^2}{(\E X)^2} = 1+o(1).
\]
Therefore, by the second moment method, if $\ell =  \frac{1}{8(r-1)}np^{1/(r-1)}-\frac{1}{r-1}$, then a.a.s.\ there is a path of length~$\ell$. This establishes the lower bound in Theorem~\ref{thm:main2} for the lower range of~$p$.

\section{Upper bound for $p=\Omega((\log n)^{r-1}/n^{r-1})$.}\label{sec:6}

Here we present two general upper bounds. The first one is better in the case when $p$ is bounded from below by a constant -- implying the bound in Theorem~\ref{thm:main1}. The second approach gives the upper bounds in Theorems~\ref{thm:main2} and~\ref{thm:main3}.
Combining the results of this section with those in Sections~\ref{sec:3}-\ref{sec:5} completes the proofs of Theorems~\ref{thm:main1}, \ref{thm:main2} and~\ref{thm:main3}.

\subsection{Bounding the union of paths}
Let $P$ be an ordered loose path with $\ell$ edges.  For an integer $i$ satisfying $r-1\le i \le n-1$, let $x_i$ count the number of edges of length of $i$ in~$P$. Clearly, $\ell= x_{r-1} +x_r + \dots  + x_{n-1}$. Observe that
\[
n-1 \ge (r-1)x_{r-1}+rx_{r}+\dots+(n-1)x_{n-1} \ge (r-1)x_{r-1} + r(\ell-x_{r-1}).
\]
Thus,
\begin{equation}\label{ell_ub}
\ell \le \frac{n-1 + x_{r-1}}{r}.
\end{equation}

Now we will bound $x_{r-1}$ from above by using a greedy approach. Let $H$ be an $r$-uniform hypergraph on the set of vertices $[n]$ (not necessary random) and $P$ an ordered path.
Let $Q$ be a subgraph of $P$ that consists of edges of length~$r-1$ only. That means $Q$ is a union of loose paths and  $|E(Q)|=x_{r-1}$. Assume that there are edges $e\in E(H)\setminus E(Q)$ with $\ell(e)=r-1$ and $f\in E(Q)$ such that $e$ precedes before $f$
and $\left(E(Q)\setminus f\right) \cup e$ is still a union of loose paths. 
That is, $f$ is the first edge of a path in~$Q$, and $first(e)< first(f)$. (Moreover, the lengths of $e$ and $f$ also imply that $last(e)< last(f)$.) Now we replace $f$ by $e$ obtaining a new union of loose paths $R$ of length~$x_{r-1}$. We can repeat this process until such $e$ and $f$ no longer exist and assume that $R$ has this property. Now we show that a greedy algorithm finds $R$.\\

Let $F=\emptyset$. Let $f_1$ be the first present in~$H$ edge of length $r-1$ for which $first(f_1)$ is as small as possible. If such an edge exists, then we add it to $F$; otherwise, we terminate. Assume that we already have chosen $F=\{f_1,\dots,f_i\}$ with $last(f_1)<last(f_2)<\dots<last(f_i)$ that creates a union of loose paths. Next we add to $F$ the next available edge $f_{i+1}$ of length~$r-1$ the occurs after $f_i$, i.e., $\ell(f_{i+1})=r-1$, $last(f_i)\le first(f_{i+1})$ and we repeat the process. If such $f_{i+1}$ does not exist, we terminate. Observe that at the end $F = E(R)$.

Let $Y$ be the random variable that counts the number of edges in the largest union of loose paths in $H^{(r)}(n, p)$ containing only edges of length $r-1$. We will show that $Y$ is concentrated around its mean. Lemma~\ref{lem:x_r_1} will imply that a.a.s.\
\begin{equation}\label{xr1_ub}
x_{r-1} \le Y = (1+o(1))n/(r-2 + p^{-1}).
\end{equation}

Now we show by using McDiarmid's inequality\footnote{This can be viewed as Warnke's inequality (from Lemma~\ref{warnke}) with $\Gamma=\Omega$. See Section~\ref{sec:3} for more details.} that $Y\sim\E Y$. Choose 
\[
t=\frac{\E Y}{\omega} = \frac{n}{\omega(r-2 + p^{-1})} \quad { and } \quad c_i=2,
\]
since by removing one vertex (or adding), we change the size of a union of loose paths by at most two. Thus,
\[
\Pr(|Y-\E Y| \ge t) \le 2 \exp\left\{-\frac{t^2}{2\sum_{i=1}^n c_i^2}\right\}
= 2 \exp\left\{-\frac{n}{8\omega^2 (r-2 + p^{-1})^2}\right\}.
\]
This probability is $o(1)$ when $p=\Omega(1)$ and $\omega = o(\sqrt{n})$.
\begin{lemma}\label{lem:x_r_1}
Let $H$ be a random $r$-hypergraph on the vertex set $[n]$ such that each edge $\{j+1<j+2<\dots<j+r\}$ (for $j\ge0$) appears with probability~$p$. Then,
\[
\E Y\leq \frac{n+p^{-1}}{r-2 +p^{-1}}.
\]
\end{lemma}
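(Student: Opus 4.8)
The plan is to view $Y$ as a stopping time in a renewal process built from GREEDY, apply Wald's identity, and then bound the overshoot of that process past the right endpoint.

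First I would describe GREEDY concretely: it accepts positions $j_1<j_2<\dots<j_Y$ in $\{0,1,\dots,n-r\}$, where $j_1$ is the smallest position carrying a length-$(r-1)$ edge and each $j_{i+1}$ is the smallest position $\ge j_i+r-1$ carrying such an edge, stopping when no admissible position $\le n-r$ remains (and $Y=0$ if none at all is present). I would couple this with an infinite run: extend $H$ by letting each edge $\{j+1,\dots,j+r\}$ with $j>n-r$ be present independently with probability $p$, and run GREEDY forever to obtain $j_1<j_2<\dots$. Then $G_0:=j_1$ and $G_i:=j_{i+1}-j_i-(r-1)$ ($i\ge1$) are i.i.d.\ with $\Pr(G_i=t)=(1-p)^tp$, so $\E G_i=p^{-1}-1$, and $j_{k+1}=G_0+k(r-1)+\sum_{i=1}^kG_i$. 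Since the original GREEDY only inspects positions in $[0,n-r]$, we get $Y=\#\{k\ge1:j_k\le n-r\}$, $j_{Y+1}>n-r$, and $Y$ is a bounded stopping time (with $\{Y\ge k\}=\{j_k\le n-r\}\in\sigma(G_0,\dots,G_{k-1})$); the degenerate cases $n<r$ and $p=1$, where $Y$ is deterministic, are dealt with trivially on the side.

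By Wald's identity, $\E[j_{Y+1}]=\E G_0+(r-1)\E Y+\E\big[\sum_{i=1}^YG_i\big]=(p^{-1}-1)+(r-2+p^{-1})\E Y$, so $\E Y=\big(\E[j_{Y+1}]-(p^{-1}-1)\big)/(r-2+p^{-1})$, and it suffices to show $\E[j_{Y+1}]\le n-1+2p^{-1}$ (indeed any bound $n+O(r+p^{-1})$ would suffice). On $B:=\{Y=0\}=\{G_0>n-r\}$ we have $j_{Y+1}=G_0$, which contributes at most $\E G_0=p^{-1}-1$; on $A:=B^c$ we have $j_Y\le n-r$, so $\E[j_{Y+1}\mathbf 1_A]\le (n-r)+\E[(j_{Y+1}-(n-r))\mathbf 1_A]$, reducing everything to an overshoot bound $\E[(j_{Y+1}-(n-r))\mathbf 1_A]\le r-2+p^{-1}$.

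For the overshoot I would condition on the step $k$ at which GREEDY stops. On $\{Y\ge k\}$ the deficit $u:=n-r-j_k\ge0$ is $\sigma(G_0,\dots,G_{k-1})$-measurable, and within $\{Y\ge k\}$ the event $\{Y=k\}$ equals $\{G_k>u-(r-1)\}$ with $G_k$ a fresh geometric; by lack of memory, the conditional expectation of the overshoot $(r-1)+G_k-u$ given $\{Y=k\}$ and $\sigma(G_0,\dots,G_{k-1})$ equals $(r-1-u)+(p^{-1}-1)$ when $u\le r-2$ and equals $p^{-1}$ when $u\ge r-1$, and in both cases is at most $r-2+p^{-1}$. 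Calling this $\rho(u_k)$ and multiplying it by the conditional stopping probability $\Pr(Y=k\mid\sigma(G_0,\dots,G_{k-1}))$ before summing, the tower property collapses the sum to $\E[\rho(u_Y)\mathbf 1_A]\le(r-2+p^{-1})\Pr(A)\le r-2+p^{-1}$. The single delicate point is exactly this: one must not crudely replace $\rho(u_k)$ by the constant $r-2+p^{-1}$ and then sum $\sum_k\Pr(Y\ge k)$, which only yields the useless circular estimate $(r-2+p^{-1})\E Y$ — it is essential to pair $\rho(u_k)$ with the stopping probability first, so that the sum telescopes against $\sum_k\Pr(Y=k)=\Pr(A)$.
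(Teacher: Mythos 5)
Your proof is correct, but it takes a genuinely different route from the paper. The paper also analyzes the greedy packing of length-$(r-1)$ edges, but it proceeds by induction on $n$: it conditions on where the first accepted edge ends (probability $(1-p)^ip$ of ending at $r+i$), invokes the inductive bound on the remaining $n-(r-1+i)$ vertices, and evaluates the resulting geometric series exactly to close the induction with the stated constant. You instead set up the renewal process explicitly, apply Wald's identity to get the exact relation $\E[j_{Y+1}]=(p^{-1}-1)+(r-2+p^{-1})\E Y$, and then bound $\E[j_{Y+1}]$ by $n+2p^{-1}-3$ via a memorylessness/overshoot argument; this yields $\E Y\leq (n+p^{-1}-2)/(r-2+p^{-1})$, marginally sharper than the lemma. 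Your key technical point --- pairing the conditional overshoot $\rho(u_k)$ with the conditional stopping probability before summing, so the sum collapses to $\Pr(A)$ rather than $\sum_k\Pr(Y\geq k)=\E Y$ --- is exactly right and is the step that makes the approach non-circular; you verify both cases ($u\leq r-2$ and $u\geq r-1$) correctly. What each approach buys: the paper's induction is elementary and self-contained but hinges on an exact summation that happens to telescope to the right constant; your Wald argument makes the renewal structure transparent and would generalize more easily (e.g.\ to other inter-edge gap distributions), at the cost of the stopping-time bookkeeping. Two very minor remarks: your parenthetical claim that ``any bound $n+O(r+p^{-1})$ would suffice'' is not literally true for the exact constant in the lemma (though it would suffice for the application in~\eqref{xr1_ub}), but this is moot since you prove the precise bound needed; and, like the paper, your argument identifies $Y$ with the greedy output, which is legitimate only because the exchange argument preceding the lemma shows greedy is optimal --- worth stating explicitly.
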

\begin{proof}
We prove this statement by induction on $n\ge r$. This is true for $n=r$, since $(r+p^{-1})/(r-2+p^{-1})\geq 1$. We then write
\begin{equation*}
\E Y\leq 1+\sum_{i=0}^{n-2(r-1)-1}(1-p)^ip\frac{n-(r-1+i)+p^{-1}}{r-2+p^{-1}}.
\end{equation*}
Here we have 1 for the first edge of our union. This ends at $r+i$ with probability $(1-p)^ip$ and the remaining term follows from our inductive assumption. Continuing, by increasing the upper index of summation, we get 
\[
\E Y \leq 1+\frac{p}{r-2+p^{-1}}\sum_{i=0}^{n-(r-1)}(1-p)^i(n-(r-1+i)+p^{-1}).
\]
Since
\begin{align*}
\sum_{i=0}^{n-(r-1)}(1-p)^i&(n-(r-1+i)+p^{-1})\\
&=(n-(r-1)+p^{-1}) \sum_{i=0}^{n-(r-1)}(1-p)^i - \sum_{i=0}^{n-(r-1)}i(1-p)^i\\
&=(n-(r-1)+p^{-1})\frac{1-(1-p)^{n-r+2}}{p}\\ 
&\qquad\qquad+ \frac{(n-(r-1)+p^{-1})(1-p)^{n-r+2}+1-p^{-1}}{p}
=\frac{n-r+2}{p},
\end{align*}
we get that
\[
\E Y\leq 1+\frac{p}{r-2+p^{-1}}\cdot \frac{n-r+2}{p}=\frac{n+p^{-1}}{r-2+p^{-1}},
\]
completing the induction.
\end{proof}
Combining~\eqref{ell_ub} and~\eqref{xr1_ub} proves the upper bound from Theorem~\ref{thm:main1}.

\subsection{First moment method}\label{sec:upper_bound}
Let $X$ count the number of paths of length~$\ell =  \frac{4^{1/(r-1)}e}{r-1}np^{1/(r-1)}$. If $p \ge (\frac{r-1}{4^{1/(r-1)}e})^{r-1}(\log n)^{r-1}/n^{r-1}$, then $\ell  \ge \log n$ and
\begin{align*}
\E X &= \binom{n}{\ell(r-1)+1}p^\ell
\le \left(\frac{en}{\ell(r-1)}\right)^{\ell(r-1)+1}p^\ell\\
&= \left(\left(\frac{en}{\ell(r-1)}\right)^{r-1}p  \left(\frac{en}{\ell(r-1)}\right)^{1/\ell} \right)^\ell\\
&\le \left(\left(\frac{en}{\ell(r-1)}\right)^{r-1}p  n^{1/\ell} \right)^\ell
= \left(\frac{1}{4} n^{1/\ell}\right)^{\ell}\\
&\le \left(\frac{1}{4} n^{1/\log n}\right)^{\ell}
= \left(\frac{e}{4}\right)^{\ell} = o(1).
\end{align*}
Thus, if $\ell \ge  \frac{4^{1/(r-1)}e}{r-1}np^{1/(r-1)}$, then a.a.s.\  there is no path of length $\ell$.
This establishes the upper bound in Theorems~\ref{thm:main2} and~\ref{thm:main3}.

\section{Lower and upper bound for $\frac{1}{n^{r-1+1/\omega}} \le p \le O((\log n)^{r-1}/n^{r-1})$ with~$1 \ll \omega \ll \log n$.}\label{sec:7}

Here we will prove Theorem~\ref{thm:main4}. The first part of the statement is proved in the following claim.

\begin{claim}
There exists $1\ll \ell_0 = O(\log n)$ such that 
\[
\left(\frac{n}{\ell_0}\right)^{r-1+1/\ell_0} p =1.
\] 
\end{claim}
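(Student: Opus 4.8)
The plan is to treat $g(x) = \left(\frac{n}{x}\right)^{r-1+1/x} p$ as a continuous function of a real variable $x$ on the interval $[1, C\log n]$ for a suitable constant $C$, and apply the intermediate value theorem. First I would check the two endpoints. At $x = 1$ we have $g(1) = n^r p \ge n^r \cdot n^{-(r-1+1)} = 1$ by the lower bound $p \ge 1/n^{r-1+1/\omega} \ge 1/n^r$ (using $\omega \gg 1$, in fact $\omega \ge 1$ suffices here), so $g(1) \ge 1$. For the upper endpoint, I would use the hypothesis $p = O((\log n)^{r-1}/n^{r-1})$: taking $x = C \log n$, we get
\[
g(C\log n) = \left(\frac{n}{C\log n}\right)^{r-1+1/(C\log n)} p \le \left(\frac{n}{C\log n}\right)^{r-1} \cdot \left(\frac{n}{C\log n}\right)^{1/(C\log n)} \cdot O\!\left(\frac{(\log n)^{r-1}}{n^{r-1}}\right).
\]
The middle factor is $n^{O(1/\log n)} = O(1)$, and the product of the outer two factors is $O\!\left(\frac{1}{C^{r-1}}\right)$, so for $C$ a sufficiently large constant (depending only on $r$ and the implied constant in $p = O(\cdot)$) we have $g(C\log n) < 1$. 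By continuity of $g$ on $[1, C\log n]$, there exists $\ell_0$ in this interval with $g(\ell_0) = 1$, i.e. $\left(\frac{n}{\ell_0}\right)^{r-1+1/\ell_0} p = 1$; and $\ell_0 = O(\log n)$ is immediate.

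It remains to show $\ell_0 \gg 1$, i.e. that $\ell_0 \to \infty$. For this I would use the lower bound on $p$ more carefully together with the constraint $\omega \ll \log n$. Suppose for contradiction that $\ell_0 = O(1)$ along some subsequence. Then at $x = \ell_0$,
\[
1 = \left(\frac{n}{\ell_0}\right)^{r-1+1/\ell_0} p \ge \left(\frac{n}{\ell_0}\right)^{r-1+1/\ell_0} \cdot \frac{1}{n^{r-1+1/\omega}} = \frac{n^{1/\ell_0 - 1/\omega}}{\ell_0^{\,r-1+1/\ell_0}}.
\]
Since $\ell_0 = O(1)$ while $\omega \to \infty$, we have $1/\ell_0 - 1/\omega \ge c$ for some constant $c > 0$, so the right side is $\Omega(n^{c}/\ell_0^{r-1}) = \Omega(n^{c}) \to \infty$, contradicting that it equals $1$. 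Hence $\ell_0 \to \infty$, which is exactly $1 \ll \ell_0$. (To conclude $\ell_0 = O(\log n)$ one may also note monotonicity is not actually needed — any solution in $[1, C\log n]$ works — but if a specific $\ell_0$ is wanted, take the infimum of the solution set, which is attained by continuity.)

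The main obstacle I anticipate is purely bookkeeping: verifying that the $n^{1/(C\log n)}$-type correction factors, which arise because the exponent is $r-1 + 1/x$ rather than $r-1$, are genuinely $O(1)$ and do not interfere with the constant $C$ chosen to make $g(C\log n) < 1$. This is routine since $a^{1/\log n} = e^{(\log a)/\log n} \to 1$ whenever $\log a = O(\log n)$, which holds here because $n/x \le n$. A secondary subtlety is that the problem only asserts existence of *some* $\ell_0$, not uniqueness, so I need not prove $g$ is monotone on the whole interval — the intermediate value theorem alone suffices, and I would explicitly remark that any such $\ell_0$ is acceptable for the subsequent argument in Theorem~\ref{thm:main4}.
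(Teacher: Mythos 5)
Your proposal is correct and follows essentially the same route as the paper: apply the intermediate value theorem to $f(x)=\left(\frac{n}{x}\right)^{r-1+1/x}p$ on an interval ending at a suitable constant multiple of $\log n$, where the upper-endpoint estimate is the same in both arguments. The only (harmless) difference is that the paper takes the left endpoint $x_1=\log\omega\gg 1$, which makes $\ell_0\gg1$ automatic, whereas you start at $x=1$ and then supply a separate, correct contradiction argument showing any root must tend to infinity.
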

\begin{proof}
For $x>0$ define
\[
f(x) = \left(\frac{n}{x}\right)^{r-1+1/x} p.
\]
Clearly, $f$ is a continuous function. We will show that there are $1\ll x_1 < x_2 = O(\log n)$ such that $f(x_1)>1$ and $f(x_2)<1$. Thus,  the Intermediate Value Theorem will imply the statement.

Recall that $\frac{1}{n^{r-1+1/\omega}} \le p$, where $1\ll \omega \ll \log n$. Let $x_1=\log \omega$. Note that
\[
f(x_1) = \left(\frac{n}{x_1}\right)^{r-1+1/x_1} p
\ge \left(\frac{n}{\log \omega}\right)^{r-1+1/\log \omega} \frac{1}{n^{r-1+1/\omega}}
= \frac{n^{1/\log \omega - 1/\omega}}{(\log \omega)^{r-1+1/\omega}}
\ge \frac{n^{1/\log \omega - 1/\omega}}{(\log \omega)^{r}}.
\]
For large $n$, we have 
\[
1/\log \omega - 1/\omega \ge 1/(2\log \omega) \ge 1/(2\log \log n).
\]
Thus, when $n$ is sufficiently large
\[
f(x_1) \ge \frac{n^{1/(2\log \log n)}}{(\log \log n)^{r}}
= \exp\left\{ \frac{\log n}{2\log \log n} - r\log \log \log n \right\},
\]
which tends to infinity. Hence, $f(x_1) > 1$.

Now let $C>0$ be an arbitrarily large constant and assume that $p\le \frac{C(\log n)^{r-1}}{n^{r-1}}$. Let $d>1$ be (sufficiently large) such that 
\[
\frac{Ce^{1/d}}{d^{r-1}}   < 1.
\]
Define $x_2 = d \log n$ and observe that
\begin{align*}
f(x_2) &\le \left(\frac{n}{d\log n}\right)^{r-1+1/(d\log n)} \frac{C(\log n)^{r-1}}{n^{r-1}}\\
&= \frac{C}{d^{r-1+1/(d\log n)}} \left(\frac{n}{\log n}\right)^{1/(d\log n)} 
\le \frac{C}{d^{r-1}} n^{1/(d\log n)} = \frac{Ce^{1/d}}{d^{r-1}}   < 1.
\end{align*}
\end{proof}

\subsection{Upper bound}
Let $\ell =  \frac{2e(r-1)\ell_0-1}{r-1}$. In the next two sections, $X$~denotes the number of paths of length $\ell$, for the most recently stated value of $\ell$. Hence, $1/\ell \le 1/\ell_0$ and, using the definition of $\ell_{0}$
\begin{align*}
\E X &= \binom{n}{\ell(r-1)+1}p^\ell
\le \left(\frac{en}{\ell(r-1)+1}\right)^{\ell(r-1)+1}p^\ell\\
&= \left(\left(\frac{n}{2(r-1)\ell_0}\right)^{r-1+1/\ell}p\right)^\ell
\le \left(\left(\frac{n}{2(r-1)\ell_0}\right)^{r-1+1/\ell_0}p\right)^\ell\\
&= \left(\frac{1}{(2(r-1))^{r-1+1/\ell_0}} \left(\frac{n}{\ell_0}\right)^{r-1+1/\ell_0}p\right)^\ell
= \frac{1}{(2(r-1))^{\ell(r-1)+\ell/\ell_0}} = o(1).
\end{align*}

\subsection{Lower bound}
Let $\ell =  \frac{\ell_0/4 - 1}{r-1}$. Hence, $1/\ell \ge 1/\ell_0$ and 
\begin{align*}
\E X &= \binom{n}{\ell(r-1)+1}p^\ell
\ge \left(\frac{n}{\ell(r-1)+1}\right)^{\ell(r-1)+1}p^\ell\\
&= \left(\left(\frac{4n}{\ell_0}\right)^{r-1+1/\ell}p\right)^\ell
\ge \left(\left(\frac{4n}{\ell_0}\right)^{r-1+1/\ell_0}p\right)^\ell\\
&= \left(4^{r-1+1/\ell_0} \left(\frac{n}{\ell_0}\right)^{r-1+1/\ell_0}p\right)^\ell
= 4^{\ell(r-1)+\ell/\ell_0} \gg 1.
\end{align*}

We will now modify the calculations from Section~\ref{sec:second_moment}. Since $s=\Theta(\ell) = O(\log n)$, we have
\begin{align*}
2\left(\frac{s}{n-s}\right)^{r-1} p^{-1} 
&\le 2\left(\frac{2s}{n}\right)^{r-1} p^{-1}
= 2\left(\frac{2(\ell(r-1)+1)}{n}\right)^{r-1} p^{-1}\\
&= 2\left(\frac{\ell_0}{2n}\right)^{r-1} p^{-1}
= \frac{1}{2^{r-2}}\left(\frac{n}{\ell_0}\right)^{1/\ell_0}
\le n^{1/\ell_0}.
\end{align*}
Consequently, we bound~\eqref{eq:second_moment} as follows:
\begin{align*}
\sum_{1\le a \le \ell}\; \sum_{0\le b \le a-1} n_{a,b} p^{2\ell-a}
&\le (\E X)^2 \frac{2\ell s}{n} \sum_{1\le a \le \ell}\; \left(2 \left(\frac{s}{n-s}\right)^{r-1} p^{-1} \right)^a\\
&\le (\E X)^2 \frac{2\ell s}{n} \sum_{1\le a \le \ell}\; \left( n^{1/\ell_0} \right)^a
\le (\E X)^2 \frac{\ell s}{n} O\left( n^{1/\ell_0} \right)^\ell.
\end{align*}
Since $\ell/\ell_0 < 1/(4(r-1))$ and $\ell s = O(\log^2 n)$, we get that 
\begin{align*}
\sum_{1\le a \le \ell}\; \sum_{0\le b \le a-1} n_{a,b} p^{2\ell-a}
&\le (\E X)^2 \frac{\ell s}{n} O(n^{\ell/\ell_0})\\
&\le (\E X)^2 \frac{\ell s}{n}O(n^{1/(4(r-1))})\\
&= (\E X)^2 O\left(\frac{\log^2 n}{n^{(4r-5)/(4r-4)}}\right)
= o((\E X)^2),
\end{align*}
As in Section~\ref{sec:5} we conclude that a.a.s.\ there is an ordered path of length $\ell$.

\section{Lower and upper bound for $1/(n^{r-1+1/\ell}) \ll p \ll 1/(n^{r-1+1/(\ell+1)})$ where $\ell=O(1)$.}\label{sec:8a}

\subsection{Upper bound}\label{sec:8}

Let $X$ count the number of paths of length~$\ell+1$.
Recall that from Section~\ref{sec:upper_bound} we know that
\[
\E X = \binom{n}{(\ell+1)(r-1)+1}p^{\ell+1}
\le n^{(\ell+1)(r-1)+1} p^{\ell+1}
\ll n^{(\ell+1)(r-1)+1} \cdot 1/(n^{(\ell+1)(r-1)+1}) = 1
\]
implying that $\E X$ tends to 0.

\subsection{Lower bound}

Similarly, for $X$ counting the number of paths of length~$\ell$,
\[
\E X = \binom{n}{\ell(r-1)+1}p^\ell
\gg \left(\frac{n}{\ell(r-1)+1}\right)^{\ell(r-1)+1} \cdot 1/(n^{\ell(r-1)+1}) = \Omega(1)
\]
yielding that $\E X$ tends to infinity.

We will now modify calculations from Section~\ref{sec:second_moment}. By assumption we have $\ell=O(1)$ and $s=\ell(r-1)+1=O(1)$. Observe that in~\eqref{eq:second_moment} we can use the following upper bounds,
\[
2\left(\frac{s}{n-s}\right)^{r-1} p^{-1} \ll 1/n^{r-1} \cdot n^{r-1+1/\ell} = n^{1/\ell}.
\]
Hence,
\begin{align*}
\sum_{1\le a \le \ell}\; \sum_{0\le b \le a-1} n_{a,b} p^{2\ell-a}
&\le (\E X)^2 \frac{2\ell s}{n} \sum_{1\le a \le \ell}\; \left(2 \left(\frac{s}{n-s}\right)^{r-1} p^{-1} \right)^a\\
&\ll (\E X)^2 \frac{O(1)}{n} O(n) = (\E X)^2 O(1),
\end{align*}
showing that 
\[
\sum_{1\le a \le \ell}\; \sum_{0\le b \le a-1} n_{a,b} p^{2\ell-a} =o((\E X)^2),
\]
as required.

\subsection{Method of moments}
Observe that if $p=c/(n^{r-1+1/\ell})$, then
\[
\E X = \binom{n}{\ell(r-1)+1}p^\ell
\sim \frac{n^{\ell(r-1)+1} }{(\ell(r-1)+1)!}\cdot c^\ell/(n^{\ell(r-1)+1}) = c^\ell/(\ell(r-1)+1)! =:\lambda.
\]
Now by generalizing the second moment calculations one can show that 
\[
\E X(X-1)\cdots (X-k+1)\to \lambda^k
\] 
for any $k\ge1$. Thus, the method of moments (see, e.g.\ Section~29.2 in~\cite{FK}) yields the final statement of Theorem~\ref{thm:main5}. 
\section{Concluding remarks}
We have found high probability estimates for $\ell_{\max}$ for all ranges of $p$. The gaps between upper and lower bounds are all of order $O(1)$ except for Theorem~\ref{thm:main3} where $\Omega(1/n^{(r-1)/2}) = p = O(\sqrt{\log n}/n^{(r-1)/4})$. In this case we are off by an order $O((\log n)^{1/(r-1)})$ factor. It would be of some interest to remove this.

\bigskip
\textbf{Acknowledgment.}
We would like to express our sincere gratitude to the referees for their careful reading of our manuscript and for their insightful comments and constructive suggestions.


\begin{thebibliography}{99}
\bibitem{AF} M. Albert and A.M. Frieze, Random graph orders, {\em Order} \textbf{6} (1989), 19--30.
%
\bibitem{Survey} A.M. Frieze, Hamilton Cycles in Random Graphs: a bibliography, \href{https://arxiv.org/abs/1901.07139}{arXiv:1901.07139}.
%
\bibitem{FK} A.M. Frieze and M. Karo\'nski, Introduction to Random Graphs, Cambridge University Press, 2015.
%
\bibitem{tables} I.~S. Gradshte\u in and I.~M. Ryzhik, Table of Integrals, Series, and Products, 7th edition, 
Elsevier/Academic Press, Amsterdam, 2015.
%
\bibitem{W} L.~Warnke, On the method of typical bounded differences, {\em Combinatorics, Probability and Computing} \textbf{25} (2016), 269--299.

\end{thebibliography}
\end{document}